\documentclass[onefignum,onetabnum]{siamart250211}

\usepackage{lipsum}
\usepackage{amsfonts}
\usepackage{graphicx}
\usepackage{epstopdf}
\usepackage{subcaption}

\usepackage{booktabs}

\usepackage{setspace}
\ifpdf
  \DeclareGraphicsExtensions{.eps,.pdf,.png,.jpg}
\else
  \DeclareGraphicsExtensions{.eps}
\fi

\newsiamremark{remark}{Remark}
\newsiamremark{hypothesis}{Hypothesis}
\crefname{hypothesis}{Hypothesis}{Hypotheses}
\newsiamthm{claim}{Claim}
\newsiamthm{assumption}{Assumption}

\title{An Example Article\thanks{Submitted to the editors DATE.
\funding{This work was funded by the Fog Research Institute under contract no.~FRI-454.}}}

\usepackage{amsopn}

\makeatletter
\newcommand*{\addFileDependency}[1]{
  \typeout{(#1)}
  \@addtofilelist{#1}
  \IfFileExists{#1}{}{\typeout{No file #1.}}
}
\makeatother

\renewcommand{\thesection}{\arabic{section}}
\renewcommand{\thesubsection}{\thesection.\arabic{subsection}}

\usepackage{cite}  

\newcommand{\ckopt}{c^{\text{k-opt}}}
\newcommand{\ckoptT}{c^{\text{k-opt}^{\top}}}
\newcommand{\lambdakopt}{\lambda^{\text{k-opt}}}

\newlength{\stdfigwidth}
\begin{document}

\title{The Convergence and Error Analysis of Coordinate Descent Methods with compression for Full Configuration Interaction}
\author{Yingzhou Li\thanks{School of Mathematical Sciences, Shanghai Key Laboratory for Contemporary Applied Mathematics, Fudan University and
Key Laboratory of Computational Physical Sciences, Ministry of Education (\email{yingzhouli@fudan.edu.cn})}  \and Qiang Wu\thanks{School of Mathematical Sciences, Fudan University (\email{wuq23@m.fudan.edu.cn})}}

\maketitle

\begin{abstract}
We study the effect of compression in Coordinate Descent Full Configuration Interaction (CDFCI) within an unconstrained optimization formulation of the full configuration interaction ground-state problem. Under suitable local assumptions, we prove that the compressed iteration converges linearly to the solution of an associated restricted problem. We also characterize the convergence point of the compressed algorithm. Under an additional exponential decay assumption on the target eigenvector, we show that the resulting eigenvalue error is of order \(O(\tau^2)\), where \(\tau\) denotes the compression threshold. Numerical results support the analysis.
\end{abstract}

\section{Introduction}

Full Configuration Interaction (FCI) provides the exact solution of the electronic Schrödinger equation within a finite orbital basis. Under the Slater determinant basis, the FCI problem can be formulated as the eigenvalue problem
\begin{equation}
Hc=\lambda c,
\end{equation}
where $H$ is the Hamiltonian matrix and the smallest eigenvalue corresponds to the ground-state energy. The dimension of $H$ grows combinatorially with the number of electrons and orbitals, making direct eigensolvers impractical except for very small systems. A number of approaches have therefore been developed for large-scale FCI calculations, including Davidson-type methods, selected configuration interaction methods, and stochastic projector methods~\cite{DAVIDSON197587,huron1973iterative,holmes2016heat,booth2009fermion,booth2013towards,lu2020full}

Among these methods, Coordinate Descent Full Configuration Interaction (CDFCI), proposed in~\cite{wang2019coordinate}, reformulates the eigenvalue problem as an unconstrained optimization problem and updates one coordinate at a time. Since each iteration only accesses a small portion of the Hamiltonian, CDFCI avoids the storage and computational costs associated with full-vector updates and is therefore suitable for very large FCI problems~\cite{wang2019coordinate,wang2023coordinate,zhang2025parallel}.

A compression step is commonly used in practical implementations of CDFCI. During the iteration, the support of the coefficient vector typically grows continuously. Without compression, both storage and computational cost increase rapidly as the iteration proceeds. To control the size of the iterate, entries whose magnitudes are below a prescribed threshold are discarded. The resulting vector remains sparse, allowing the algorithm to be applied to larger systems.

The compression step, however, makes the analysis substantially more difficult. Most convergence analyses of coordinate descent methods assume that the iteration is performed in the full coordinate space and do not account for compression or truncation of the iterates~\cite{nesterov2012efficiency,richtarik2014iteration,wright2015coordinate}. In compressed CDFCI, however, coordinates may be repeatedly removed and reintroduced during the iteration, and the resulting algorithm falls outside the standard coordinate descent framework. Consequently, the existing theory cannot be directly applied to compressed CDFCI.

Compression and truncation have also been used in eigenvalue computations. Representative examples include truncated power methods, sparse PCA algorithms, compressed subspace iterations, and Krylov-based eigensolvers~\cite{yuan2013truncated,liuAnalysisTruncatedOrthogonal2021,d2004direct,greene2022approximating,saad2025global,casulli2026lanczos}. Most existing analyses rely on the assumption that the target eigenvector is sparse or approximately sparse. In contrast, the exact FCI eigenvector is generally not sparse, and compression is introduced solely to control the size of the iterate. Therefore, these results do not directly apply to CDFCI with compression.

The effect of compression is also related to eigenvalue perturbation theory~\cite{chenPERTURBATIONANALYSISEIGENVECTOR2012,fanEigenvectorPerturbationBound,jiaConvergenceAnalysisInexact2008}. Classical perturbation results describe the change of eigenvalues and eigenvectors under matrix perturbations. In the present setting, however, the compression error is generated dynamically during the iteration and cannot be represented by a fixed perturbation matrix. This requires a different analysis.

The purpose of this paper is to study the effect of compression in CDFCI. We establish convergence results for the compressed iteration and quantify the error introduced by compression.

The main contributions of this work are summarized as follows.

\begin{enumerate}
\item We prove that the compressed CDFCI iteration converges linearly to the solution of an associated restricted optimization problem.

\item We characterize the limiting point of the compressed iteration and show its relation to the corresponding restricted eigenvalue problem.

\item We derive an eigenvalue error estimate caused by compression. In particular, we prove that the error between the ground-state eigenvalue \(\lambda\) of the full problem and the optimal eigenvalue \(\lambda_{k\text{-opt}}\) of the compressed \(k\)-dimensional problem satisfies
\[
|\lambda-\lambda_{k\text{-opt}}|=O(\tau^2),
\]
where \(\tau\) denotes the compression threshold.
\end{enumerate}

The remainder of this paper is organized as follows. Section~2 reviews the coordinate descent algorithm with compression. Section~3 presents the main theoretical results, including the linear convergence rate and an upper bound on the energy error, and outlines the key ideas of the proofs. Section~4 is devoted to the rigorous proofs of these results. Section~5 reports numerical experiments that validate the theoretical findings and illustrate the practical performance of the algorithm. Finally, Section~6 concludes the paper and discusses possible directions for future research.

\section{Preliminaries on CDM and Compression }
\subsection{Dimensionality and Sparsity Structure}

The dimension of the FCI Hamiltonian matrix is determined by the number of spin orbitals \(M\) and electrons \(N\), scaling according to the binomial coefficient \(\binom{M}{N}\). Consequently, the matrix dimension grows exponentially with respect to the system size. This combinatorial growth is illustrated in \cref{tab:fci-summary}, which reports the dimensions of the FCI Hamiltonian for the water molecule (H\(_2\)O, \(N=10\)) under various basis sets.

Despite this exponential growth in dimension, the FCI Hamiltonian matrix remains extremely sparse due to the structure of the second-quantized Hamiltonian: a matrix element is nonzero only when the corresponding pair of Slater determinants differs by at most two spin orbitals. As a consequence, the number of nonzero entries per row scales only polynomially with \(M\) and \(N\). The last column of \cref{tab:fci-summary} reports an estimated upper bound on the maximum number of nonzero entries per row, which is derived from the counts of single and double excitations:
\[
N(M-N) + \frac{N(N-1)}{2} \cdot \frac{(M-N)(M-N-1)}{2}.
\]
\begin{table}[h]
\centering
\begin{tabular}{lcccc}
\hline
Basis set & Spin orbitals $M$ & FCI dimension & Max nnz per row \\
\hline
STO-3G   & 24  & $2.0\times 10^6$      & 4,235 \\
cc-pVDZ  & 48  & $6.54\times 10^9$     & 32,015 \\
cc-pVTZ  & 82  & $2.13\times 10^{12}$  & 115,740 \\
cc-pVQZ  & 140 & $5.73\times 10^{14}$  & 378,625 \\
\hline
\end{tabular}
\caption{FCI Hamiltonian dimension and estimated maximum number of nonzero elements per row for H$_2$O ($N=10$) under different basis sets.}
\label{tab:fci-summary}
\end{table}
Furthermore, the ground-state eigenvector of the FCI Hamiltonian exhibits exponential decay. Specifically, the magnitudes of its entries, when sorted in descending order, decrease exponentially. This property provides a justification for the truncation strategy, as the tail components make a negligible contribution to the wave function. This decay also plays a central role in establishing sharper error bounds for the compression scheme.

\begin{figure}[h]
    \centering
    \includegraphics[width=\stdfigwidth]{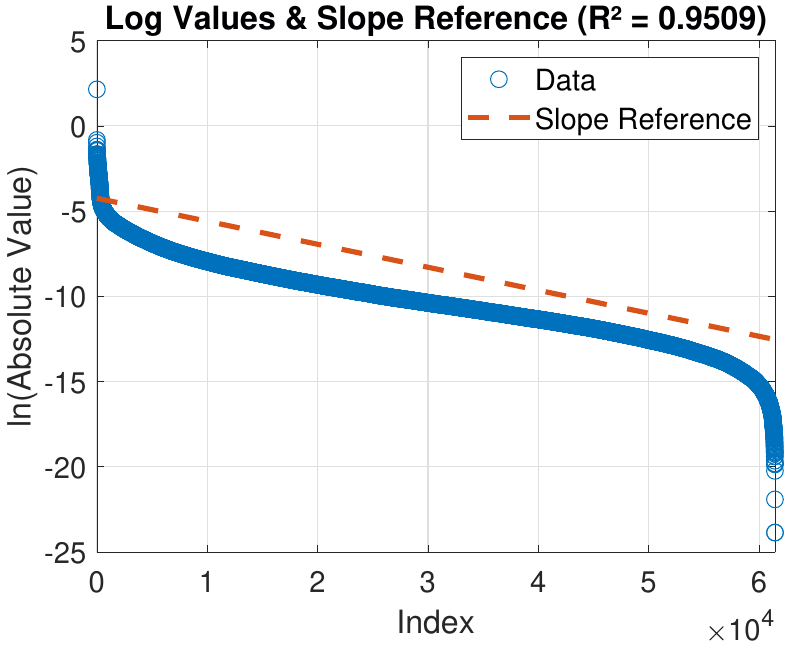}
    \caption{Exponential decay of the ground-state eigenvector components for H$_2$O-STO3G basis.}
    \label{fig:exp-decay}
\end{figure}

\subsection{Coordinate Descent with Compression}
\label{subsec:cdm-compression}

The FCI eigenvalue problem can be recast as the following unconstrained nonconvex optimization problem:
\begin{equation}
\label{eq:optimization_problem}
    \min_{c \in \mathbb{R}^{N_{\mathrm{FCI}}}} f(c) = \left\| H + c c^{\top} \right\|_{\mathrm{F}}^2,
\end{equation}
where \(\|\cdot\|_{\mathrm{F}}\) denotes the Frobenius norm and \(N_{\mathrm{FCI}}\) is the dimension of the FCI Hamiltonian matrix. The gradient of the objective function is given by
\begin{equation}
    \nabla f(c) = 4 H c + 4 \left( c^{\top} c \right) c.
\end{equation}
As established in~\cite{li2019coordinatewise}, let
\[
\lambda=\lambda_0<\lambda_1\le\lambda_2\le\cdots
\]
be the eigenvalues of \(H\), and let \(c_i\) denote the normalized eigenvector corresponding to \(\lambda_i\). The stationary points of~\eqref{eq:optimization_problem} are precisely \(0\) and
\[
\pm\sqrt{-\lambda_i}\,c_i,\qquad i=0,1,\ldots.
\]
Moreover, only the points \(\pm\sqrt{-\lambda}\,c_0\) are local minimizers, and both attain the same global minimum value. All other stationary points are saddle points or local maximizer. Consequently, solving~\eqref{eq:optimization_problem} yields both the ground-state energy \(\lambda\) and the corresponding ground-state eigenvector \(c_0\).

To solve the optimization problem~\eqref{eq:optimization_problem}, traditional eigenvalue solvers and general-purpose optimization algorithms are computationally intractable due to the exponential dimensionality of the Hamiltonian matrix. Consequently,~\cite{li2019coordinatewise} proposed a coordinate descent framework for solving this optimization problem.
    
The Coordinate Descent FCI (CDFCI) algorithm maintains two sparse vectors, \(c^{(l)}\) and \(b^{(l)}\), throughout the iterations. Here, \(c^{(l)}\) denotes the coefficient vector computed at the \(l\)-th iteration, while \(b^{(l)}\) serves as a compressed approximation of the matrix-vector product \(H c^{(l)}\).

In the \((l+1)\)-th iteration, CDFCI first identifies the coordinate index \(i^{(l+1)}\) corresponding to the gradient component with the largest magnitude:
\begin{equation}
    i^{(l+1)} = \operatorname*{arg\,max}_{j} \left| 4 b_j^{(l)} + 4 \left( (c^{(l)})^{\top} c^{(l)} \right) c_j^{(l)} \right|.
\end{equation}
Subsequently, the algorithm performs an exact line search to determine the optimal update. To facilitate the compression of \(b\), we formulate this step as finding the optimal value for the selected coordinate, rather than an additive increment. Specifically, the new value \(\alpha\) is determined by
\begin{equation}
    \alpha = \operatorname*{arg\,min}_{\widetilde{\alpha} \in \mathbb{R}} \; f\left( c^{(l)} - c_{i^{(l+1)}}^{(l)} e_{i^{(l+1)}} + \widetilde{\alpha} e_{i^{(l+1)}} \right),
\end{equation}
where \(e_{i^{(l+1)}}\) denotes the standard basis vector corresponding to index \(i^{(l+1)}\). This update rule implies that the entry \(c_{i^{(l+1)}}\) is implicitly reset to zero and then assigned the new value \(\alpha\) that minimizes the objective function.

The vector \(b^{(l)}\) plays a pivotal role in ensuring computational efficiency. Specifically, once the new coefficient vector \(c^{(l+1)}\) is determined, \(b^{(l)}\) is updated recursively to obtain \(b^{(l+1)}\). The update rule is given by
\begin{equation}
    b^{(l+1)} = b^{(l)} - c_{i^{(l+1)}}^{(l)} H_{:, i^{(l+1)}} + \alpha H_{:, i^{(l+1)}}.
\end{equation}
This incremental update strategy avoids the expensive recomputation of the full matrix-vector product \(H c\). Instead, it efficiently adjusts \(b^{(l)}\) by subtracting the contribution of the previous scalar component at coordinate \(i^{(l+1)}\) and adding the contribution from the newly computed value \(\alpha\).

However, updating the vector \(b\) exactly is computationally expensive. To reduce the memory cost,~\cite{wang2019coordinate} introduced a compression strategy that controls when new nonzero entries are added to \(b\). Given a threshold \(\tau\), the following rules are applied to each coordinate \(j\):

\begin{romannum}
    \item \textbf{If \(b_j^{(l)} \neq 0\)} (the coordinate is already selected): The value is always updated, regardless of the magnitude of the change.
    
    \item \textbf{If \(b_j^{(l)} = 0\)} (the coordinate is not selected): The coordinate is updated only if the new term is sufficiently large, that is, if
    \[
    \left| \alpha H_{j,i^{(l+1)}} \right| > \tau.
    \]
    Otherwise, the update is discarded and \(b_j^{(l+1)}\) remains zero.
\end{romannum}

This strategy preserves all existing nonzero entries, while new coordinates are added to the active set only if their contribution is significant.

A coordinate \(i\) remains inactive if it is not selected by the greedy rule. For such a coordinate, \(c_i\) stays zero. The compression strategy ensures that \(b_i\) also remains zero unless it receives a large update. This controls the number of nonzero elements (the support size). Crucially, once a coordinate \(i\) is selected and updated, it enters the active set and is never compressed again. Thereafter, its value is recomputed at every iteration and is no longer affected by the compression threshold.

This implies that we only discard entries in \(b\) for coordinates where \(c_i = 0\). Consequently, the Rayleigh quotient
\begin{equation}
    \frac{(c^{(l)})^{\top} H c^{(l)}}{(c^{(l)})^{\top} c^{(l)}}
\end{equation}
can be computed exactly using the auxiliary vector \(b\):
\begin{equation}
    \frac{(c^{(l)})^{\top} b^{(l)}}{(c^{(l)})^{\top} c^{(l)}}.
\end{equation}
This equality holds because any compressed term in \(b\) corresponds to a zero coefficient in \(c\), so it makes no contribution to the inner product (that is, \(c_i b_i = 0\)).

\section{Main Theoretical Results}

In this section, we present the theoretical analysis of the CDFCI algorithm with compression. Our analysis consists of two main parts. First, we prove the linear convergence of the algorithm by studying the energy decrease in each iteration. Second, we analyze the convergence point of the algorithm and show that the final result approximates the true ground state with a bounded energy error. Together, these results provide theoretical guarantees for both the convergence rate and the solution accuracy.

\subsection{Linear Convergence}

In this subsection, we analyze the convergence of the CDFCI algorithm with compression. We begin by stating the necessary assumptions. Throughout the analysis, we assume that the Hamiltonian matrix \(H\in\mathbb{R}^{n\times n}\) is sparse, and let \(n_z\) denote the maximum number of nonzero entries in any row of \(H\). This assumption is consistent with the structure of the FCI Hamiltonian, as discussed in Section~2.

We start by introducing the necessary definitions and lemmas. These results establish that the objective function is strongly convex in a neighborhood of the optimal solution, which is the key ingredient in the proof of the linear convergence result stated in~\cref{theorem:linear-convergence}.
\begin{definition}
\label{def:strong_convexity}
A continuously differentiable function \( g: \mathbb{S} \to \mathbb{R} \) is said to be strongly \(\|\cdot\|_p\)-convex if there exists a constant \(\mu_p > 0\) such that
\begin{equation}
    g(y) \geq g(x) + \nabla g(x)^{\top}(y-x) + \frac{\mu_p}{2}\|y-x\|_p^2, \quad \forall x, y \in \mathbb{S}.
\end{equation}
\end{definition}

To analyze the local behavior of the objective function near the optimal solutions \(\pm c\), we define the following neighborhoods, following the approach in~\cite{li2019coordinatewise}:
\begin{align*}
    B^{\pm} &= \left\{ y \;\middle|\; \| y \mp c \|_2 \leq \frac{1}{30} \frac{\min(-2\lambda, -\lambda+\lambda_2)}{\sqrt{-\lambda}} \right\}, \\
    D^{\pm} &= \left\{ x \in B^{\pm} \;\middle|\; f(x) \leq \min_{y \in \partial B^{\pm}} f(y) \right\}.
\end{align*}
Here, \(\lambda\) and \(\lambda_2\) denote the smallest and second smallest eigenvalues of the Hamiltonian \(H\), respectively. The following \cref{lem:strong_convexity_constants} shows that the objective function \(f\) is strongly convex within the neighborhoods \(B^{\pm}\).

\begin{lemma}
\label{lem:strong_convexity_constants}
    The function \( f \) is strongly \(\|\cdot\|_2\)-convex on both \( B^{+} \) and \( B^{-} \) with the constant \(\mu_2 = 3 \min(2 |\lambda|, |\lambda-\lambda_2|)\). Furthermore, for any \( p \geq 1 \), there exists a constant \(\mu_p > 0\) such that \( f \) is strongly \(\|\cdot\|_p\)-convex.
\end{lemma}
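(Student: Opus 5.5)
The plan is to bound the Hessian of \(f\) from below uniformly on \(B^{\pm}\) and then integrate along line segments. Here \(c\) in the definition of \(B^{\pm}\) is the minimizer \(\sqrt{-\lambda}\,c_0\), where \(c_0\) is the normalized ground-state eigenvector, so \(\|c\|_2^2=-\lambda\). Write \(m=\min(-2\lambda,\lambda_2-\lambda)\) and \(\delta=\frac{m}{30\sqrt{-\lambda}}\) for the radius of \(B^{\pm}\). From \(\nabla f(x)=4Hx+4(x^\top x)x\) we get
\[
\nabla^2 f(x)=4H+4(x^\top x)I+8xx^\top .
\]

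\textbf{Step 1: the Hessian at the minimizer.} At \(x=c\),
\[
\nabla^2 f(c)=4(H-\lambda I)+8(-\lambda)c_0c_0^\top .
\]
This matrix is diagonal in the eigenbasis \(\{c_i\}\) of \(H\). On \(c_0\) its eigenvalue is \(-8\lambda\). On \(c_i\) with \(i\ge1\) it is \(4(\lambda_i-\lambda)\). Hence \(\nabla^2 f(c)\succeq 4m\,I\).

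I read the paper's \(\lambda_2\) as the gap-defining eigenvalue next to \(\lambda\); with the indexing \(\lambda_0<\lambda_1\le\cdots\) one should check whether \(\lambda_1\) is meant, and I will adopt the paper's convention. The same computation holds at \(-c\).

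\textbf{Step 2: perturbation on the ball.} For \(y\) with \(\|y-c\|_2\le\delta\),
\[
\nabla^2 f(y)-\nabla^2 f(c)=4(\|y\|^2-\|c\|^2)I+8(yy^\top-cc^\top).
\]
Both \(\bigl|\|y\|^2-\|c\|^2\bigr|\) and \(\|yy^\top-cc^\top\|_2\) are at most \(2\|c\|\delta+\delta^2\), so the operator norm of the difference is at most \(12(2\sqrt{-\lambda}\,\delta+\delta^2)\).

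Now substitute \(\delta\). First, \(24\sqrt{-\lambda}\,\delta=\tfrac{24}{30}m=0.8m\). Second, \(m\le-2\lambda\) gives \(\delta^2=\frac{m^2}{900(-\lambda)}\le \frac{m}{450}\), so \(12\delta^2\le m/37.5\). Together,
\[
\nabla^2 f(y)\succeq\bigl(4-0.8-0.03\bigr)m\,I\succeq 3m\,I .
\]
This yields \(\mu_2=3m=3\min(2|\lambda|,|\lambda-\lambda_2|)\), with the same bound on \(B^{-}\) by symmetry \(f(-x)=f(x)\).

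I expect this constant bookkeeping to be the only delicate point. The factor \(1/30\) in the radius is exactly what absorbs the perturbation, and the bound \(m\le -2\lambda\) is needed to control the quadratic term \(\delta^2\).

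\textbf{Step 3: from the Hessian bound to strong convexity.} \(B^{\pm}\) is a Euclidean ball, hence convex, so for \(x,y\in B^{\pm}\) the whole segment lies in \(B^{\pm}\). Taylor's formula with integral remainder gives
\[
f(y)=f(x)+\nabla f(x)^\top(y-x)+\int_0^1(1-t)(y-x)^\top\nabla^2f(x+t(y-x))(y-x)\,dt .
\]
Since the Hessian is at least \(3m\,I\) along the segment, the remainder is at least \(\tfrac{3m}{2}\|y-x\|_2^2\), which is strong \(\|\cdot\|_2\)-convexity in the sense of Definition~\ref{def:strong_convexity}.

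\textbf{Step 4: general \(p\).} In the finite dimension \(n\), norm equivalence gives \(\|z\|_2\ge\kappa_p\|z\|_p\). One can take \(\kappa_p=1\) for \(p\ge2\) and \(\kappa_p=n^{1/2-1/p}\) for \(1\le p<2\). Therefore \(\mu_p=\kappa_p^2\mu_2>0\) works. For \(p=1\) this gives \(\mu_1=\mu_2/n\), which is positive though dimension-dependent, and that is all the statement requires.
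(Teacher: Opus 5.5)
Your proof is correct. The Hessian spectrum at $\pm\sqrt{-\lambda}\,c_0$, the perturbation bound on the ball (at most about $0.83$ times your $m$, which leaves $\nabla^2 f\succeq 3mI$), the Taylor integral remainder and the norm-equivalence step for general $p$ all check out. Your reading of $\lambda_2$ as the second-smallest eigenvalue is the one the paper states when defining $B^{\pm}$, and it is the reading the result needs; with the literal third-smallest reading the constant would fail already at $\pm\sqrt{-\lambda}\,c_0$.

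The paper gives no proof of this lemma. It imports the lemma, together with the radius factor $\frac{1}{30}$, from \cite{li2019coordinatewise}. The argument there is presumably the same standard route you took: Hessian at the minimizer, Weyl-type perturbation over the ball, then norm equivalence. So there is no genuinely different argument in the paper to compare against.
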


We assume that the initial vector \( c^{(0)} \) lies in \( D^{\pm} \). This assumption is reasonable in quantum chemistry. Specifically, the Hartree-Fock method typically provides an initial guess that is close to the true ground state. Therefore, the initial coefficient vector is usually close to a minimizer of \( f \), and thus lies inside the strongly convex neighborhood required for our analysis.

Let \( V_0 \) denote the set of nonzero coordinates (the support) of the initial vector \( c^{(0)} \). Let \( V_l \) be the cumulative support set, which contains all nonzero coordinates appearing in the iterates up to the \( l \)-th step. We prove that the total number of active coordinates remains bounded.

\begin{definition}
\label{def:maximal-support}
We define the cumulative support \( V_l \) at iteration \( l \) as
\[
V_l := \bigcup_{k=0}^l \operatorname{supp}(c^{(k)}), \quad l \ge 0,
\]
and the maximal support set \( V \) as
\[
V := \bigcup_{l \ge 0} V_l.
\]
\end{definition}

\begin{lemma}
\label{lemma:support_set}
The maximal support set \( V \) defined in \cref{def:maximal-support} is finite.
\end{lemma}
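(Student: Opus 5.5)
The plan is to show that a coordinate can enter the support only if some entry of $b$ exceeds a fixed positive threshold, and that the iterates stay in a bounded set, so only finitely many coordinates can ever qualify. First I would establish boundedness. Exact coordinate line search never increases $f$, so $f(c^{(l)})\le f(c^{(0)})$ for all $l$. Because $f(c)=\|H\|_F^2+2c^\top Hc+\|c\|_2^4$ is coercive, the sublevel set is bounded. Under the standing assumption $c^{(0)}\in D^{\pm}$, the iterates in fact remain in $D^{\pm}\subset B^{\pm}$, since $f$ cannot exceed $\min_{\partial B^\pm} f$. Either way there is a constant $R$ with $\|c^{(l)}\|_\infty\le\|c^{(l)}\|_2\le R$ for all $l$. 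The exact line-search value $\alpha$ also stays in this bounded set, since $\alpha$ is the new coordinate of $c^{(l+1)}$, so $|\alpha|\le R$.

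Next I would characterize how a new index $j\notin V_l$ can enter. By the compression rule, $b_j$ stays zero unless some update satisfies $|\alpha H_{j,i^{(l+1)}}|>\tau$. While $b_j=0$ and $c_j=0$, the $j$-th gradient component $4b_j+4(c^\top c)c_j$ is zero. Such a $j$ therefore cannot be chosen by the greedy arg max, except in the degenerate case where all gradient components vanish, and then $c$ is stationary and nothing changes. Hence $j$ can enter $\operatorname{supp}(c)$ only after $b_j$ has become nonzero. That requires an index $i=i^{(l+1)}$ with $H_{j,i}\neq 0$ and $|\alpha|\,|H_{j,i}|>\tau$. In particular $|H_{j,i}|>\tau/R$, and $i$ must itself already be selected, so $i\in V$.

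Finally I would count. Let $A$ be the set of coordinates that are ever selected. This set contains $V\setminus V_0$ together with the possible updates of $V_0$. By the previous step every $j\in A\setminus V_0$ is a neighbor of some $i\in A$ through an entry with $|H_{j,i}|>\tau/R$ and $H_{j,i}\ne0$. A naive breadth-first bound only gives that $A$ lies in the connected component of $V_0$ in the graph with these edges, whose vertices have degree at most $n_z$. The main obstacle is that this component could be large; in a finite FCI space it is trivially finite, since $n<\infty$. If finiteness is meant nontrivially, for example in the sense of a bound uniform in $n$, I would first try a budget argument. The total $\sum_l|\alpha^{(l)}|$ over the iterations that introduce new entries can be controlled via the monotone decrease of $f$: the strong convexity from \cref{lem:strong_convexity_constants} gives a per-step decrease proportional to the squared step. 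Each $b$-activation needs $|\alpha|>\tau/\max|H_{ji}|$. That bound limits the number of distinct activating updates, and each such update activates at most $n_z$ coordinates. Combining the two yields $|V|\le|V_0|+n_z\cdot(\text{number of activating steps})<\infty$.
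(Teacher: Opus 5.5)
Your aside that $V$ is ``trivially finite, since $n<\infty$'' is already a complete proof, and it is exactly the paper's argument. The sets $V_0\subseteq V_1\subseteq\cdots$ are all subsets of $\{1,\dots,n\}$, so their union is too. The lemma asserts nothing more, so the boundedness of the iterates, the activation criterion and the graph picture are not needed. Lead with the one-line argument rather than presenting it as a fallback.

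If you do want the stronger, $n$-uniform bound, the budget argument as sketched has a genuine gap. The compression rule tests $|\alpha H_{j,i^{(l+1)}}|$, where $\alpha$ is the \emph{new value} of the coordinate, not the increment $\alpha-c^{(l)}_{i^{(l+1)}}$. The strong-convexity decrease only gives $f(c^{(l)})-f(c^{(l+1)})\ge\frac{\mu_2}{2}\bigl(\alpha-c^{(l)}_{i^{(l+1)}}\bigr)^2$. A step that activates a new entry of $b$ can therefore use up an arbitrarily small part of the budget $f(c^{(0)})-f(c)$. It is enough that $|c_{i}|$ has crept just past $\tau/|H_{ji}|$ through many small increments. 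The budget controls $\sum_l(\text{increment})^2$, but not $\sum_l|\text{increment}|$ or $\sum_l|\alpha^{(l)}|$. So it does not bound how many coordinates ever become large enough to activate neighbours, and the estimate $|V|\le|V_0|+n_z\cdot(\text{number of activating steps})$ remains unproved. This does not affect the lemma as stated, which needs only the finite ambient index set.
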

\begin{proof}
By construction, the sets satisfy the recurrence \(V_l = V_{l-1} \cup \operatorname{supp}(c^{(l)})\). Consequently, \(\{V_l\}\) forms a nested sequence:
\[
V_0 \subseteq V_1 \subseteq V_2 \subseteq \cdots.
\]
Since every \(V_l\) is a subset of the finite index set \(\{1, 2, \ldots, n\}\), their union \(V = \bigcup_{l \ge 0} V_l\) must also be a subset of \(\{1, 2, \ldots, n\}\). Therefore, \(V\) is finite. Moreover, by definition, \(V\) contains the support of \(c^{(l)}\) for all \(l\), and thus serves as the maximal support set throughout the iterations.
\end{proof}

    To establish the linear convergence of the CDFCI algorithm with compression, we proceed in three steps, each is supported by a corresponding lemma. The overall strategy is as follows:
    \begin{enumerate}
        \item Function value gap control: We first show that the objective gap $f({c}^{(l)}) - f({c} )$ is  $O(\tau^2)$ when the step $l$ is sufficiently large as in \cref{lemma:gap}
        \item  Based on the error bound, we prove that the limit of the sequence lies inside the region where the function is strongly convex. This is shown in \cref{lemma:within-area}.
        \item Convergence rate: Finally, by leveraging the strong convexity of the objective in this region, we establish the linear convergence of the algorithm as in \cref{theorem:linear-convergence}.
    \end{enumerate}

\begin{lemma}
\label{lemma:gap}
     Let $c^{(0)} \in D^{\pm}$ be the initial point. When $l$ is sufficiently large, we can obtain
   \[ f(c^{(l)})-f(c)<\frac{8n_z^2\tau^2}{\mu_1}.\]
\end{lemma}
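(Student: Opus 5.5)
We compare the greedy step with compression against an exact greedy coordinate descent step on $f$, then combine it with strong convexity in $\|\cdot\|_1$ (\cref{lem:strong_convexity_constants}) to get the $O(\tau^2)$ gap.

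\textbf{Step 1: the perturbed gradient.} At iteration $l$, let $g^{(l)}=\nabla f(c^{(l)})=4Hc^{(l)}+4\|c^{(l)}\|_2^2c^{(l)}$. Let $\tilde g^{(l)}=4b^{(l)}+4\|c^{(l)}\|_2^2c^{(l)}$ be the surrogate gradient actually used by the algorithm.

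\textbf{Step 2: bound the compression error.} By the compression rule, $b^{(l)}$ differs from $Hc^{(l)}$ only on coordinates $j$ with $b_j=0$. At such a coordinate, every discarded contribution had size at most $\tau$. The discarded terms at $j$ come from columns $i$ with $H_{j,i}\neq 0$, and there are at most $n_z$ of these. I would argue that, in the accounting relative to the current values, at most one discarded term per column survives. This gives
\[
\|b^{(l)}-Hc^{(l)}\|_\infty\le n_z\tau,\qquad \|\tilde g^{(l)}-g^{(l)}\|_\infty\le 4n_z\tau .
\]
This is the delicate step. Repeated updates of the same column could accumulate discarded mass, so one may need to reinterpret the bound as applying to the net difference $(\alpha-c_i^{\mathrm{old}})H_{j,i}$, or simply adopt it as the model of compression. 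I expect this to be the main obstacle.

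\textbf{Step 3: per-step descent.} The greedy index $i$ maximizes $|\tilde g_i|$. Hence
\[
|g_i|\ge|\tilde g_i|-4n_z\tau\ge\|\tilde g\|_\infty-4n_z\tau\ge\|g\|_\infty-8n_z\tau .
\]
Exact line search along $e_i$ does at least as well as a gradient step with the coordinate Lipschitz constant $L$. That constant is bounded on the sublevel set $D^{\pm}$, which the iterates never leave because $f$ is monotonically nonincreasing under exact line search. Therefore
\[
f(c^{(l)})-f(c^{(l+1)})\ge\frac{1}{2L}\bigl(\max(\|g\|_\infty-8n_z\tau,0)\bigr)^2 .
\]

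\textbf{Step 4: the gap bound.} Strong $\|\cdot\|_1$-convexity on $B^{\pm}$ gives a Polyak–Łojasiewicz inequality in the dual norm:
\[
f(x)-f(c)\le\frac{1}{2\mu_1}\|\nabla f(x)\|_\infty^2 .
\]
Suppose, for contradiction, that $f(c^{(l)})-f(c)\ge 8n_z^2\tau^2/\mu_1$ for infinitely many $l$; by monotonicity of $f(c^{(l)})$, then for all $l$. The inequality above yields
\[
\|g^{(l)}\|_\infty\ge\sqrt{2\mu_1\cdot 8n_z^2\tau^2/\mu_1}=4n_z\tau ,
\]
which is not yet enough to beat $8n_z\tau$.

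To close the constant I would refine Step 3. Since $|g_i|\ge|\tilde g_i|-4n_z\tau$ and $|\tilde g_i|\ge|\tilde g_j|\ge|g_j|-4n_z\tau$ for every $j$, the dichotomy should be set up directly on $\tilde g$. I would aim to show that a gap $\ge 8n_z^2\tau^2/\mu_1$ forces a decrease of at least a fixed positive amount $\delta(\tau)$ per step, for instance by exploiting that the erroneous coordinates satisfy $c_j=0$, which might improve the constant. Since $f$ is bounded below, a uniform decrease every step is impossible, so the gap must drop below the threshold. Monotonicity of $f(c^{(l)})$ then keeps it below for all larger $l$.

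If the constant does not match exactly, I would expect the argument to be adjusted: either use the error bound $\|\tilde g-g\|_\infty\le 2n_z\tau$, or note that only the inactive-coordinate error matters. The structure, perturbed greedy step plus Polyak–Łojasiewicz plus monotone bounded sequence, is the plan.
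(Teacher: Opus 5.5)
Your plan does not yet prove the lemma, and you essentially acknowledge this: the contradiction in Step 4 never materializes.

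\textbf{Why it fails.} You model compression as a two-sided perturbation $\|\tilde g^{(l)}-g^{(l)}\|_\infty\le 4n_z\tau$ on all coordinates, so Step 3 pays this error twice, once on the selected index and once on the competitor. The hypothesis $f(c^{(l)})-f(c)\ge 8n_z^2\tau^2/\mu_1$ gives only $\|g^{(l)}\|_\infty\ge 4n_z\tau$ through your dual-norm inequality. Then $\max(\|g^{(l)}\|_\infty-8n_z\tau,0)$ may vanish and no decrease is forced. Your fallback $\|\tilde g-g\|_\infty\le 2n_z\tau$ does not help either: it gives $|g_i|\ge\|g\|_\infty-4n_z\tau\ge 0$, which is still not a uniform positive decrease $\delta(\tau)$. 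Treating the compression error as an unstructured $\ell_\infty$ perturbation is too lossy to reach this constant.

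\textbf{The missing idea.} The error is one-sided in the coordinates; you mention this only in passing at the end. On selected coordinates ($i\le k$ in the paper's ordering), the paper's model has $b^{(l)}_i=(Hc^{(l)})_i$. So $\tilde g_i=g_i$ there, and the greedy index $j_l$ maximizes the \emph{true} $|g_i|$ over $1\le i\le k$. On unselected coordinates $b_i^{(l)}=c_i^{(l)}=0$, so $\tilde g_i=0$ never competes in the greedy selection. Meanwhile the true gradient there is directly small: $|g_i|=4|(Hc^{(l)})_i|\le 4\sum_j|H_{ij}c_j^{(l)}|<4n_z\tau$.

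This also removes your accumulation worry. Each update is written as resetting $c_j$ to $0$ and assigning a new value $\alpha$, with the test applied to $|\alpha H_{ij}|$. Hence $(Hc^{(l)})_i$ involves only the current values $c_j^{(l)}$, each of which failed the test at its last update. Nothing accumulates for a row whose $b_i$ has stayed zero.

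\textbf{How the paper closes the argument.} With this structure, the paper needs no contradiction. Since $f(c^{(l)})$ is monotone and bounded below, $\frac{1}{2L}(\nabla_{j_l}f(c^{(l)}))^2\le f(c^{(l)})-f(c^{(l+1)})\to 0$. So eventually every active component satisfies $|g_i|<n_z\tau$. Combined with the inactive bound, this gives $\|g^{(l)}\|_\infty<4n_z\tau$. Your Step 4 inequality then yields exactly $f(c^{(l)})-f(c)<(4n_z\tau)^2/(2\mu_1)=8n_z^2\tau^2/\mu_1$.

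Your contradiction framework would also close once this structure is used. A gap of at least $8n_z^2\tau^2/\mu_1$ forces $\|g\|_\infty\ge 4n_z\tau$. That maximum must then be attained at an active coordinate, which the greedy rule picks exactly, giving a uniform decrease of at least $8n_z^2\tau^2/L$ per step.
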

\begin{lemma}
\label{lemma:within-area}
Define the maximal support set $V$ of the initial point $c^{(0)}$
as in \cref{def:maximal-support}.  
Consider the principal submatrix of $H$ restricted to the index set $V \times V$.  
Let $\ckopt$ be the eigenvector corresponding to its smallest eigenvalue $\lambdakopt$, normalized such that $\|\ckopt\|_2 = \sqrt{-\lambdakopt}$.  
When $\tau$ is sufficiently small, $\ckopt$ lies within $D^{\pm}$.

\end{lemma}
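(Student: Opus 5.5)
We would prove the claim by comparing function values, with Lemma~\ref{lemma:gap} supplying the key estimate. Extend $\ckopt$ by zero outside $V$ and view it as a vector in $\mathbb{R}^n$. For every $x$ supported in $V$ we have $f(x)=\|H\|_F^2-\|H_{VV}\|_F^2+\|H_{VV}+x_Vx_V^\top\|_F^2$, so minimizing $f$ over vectors supported in $V$ is exactly the FCI-type problem for the principal submatrix $H_{VV}$. By the characterization of stationary points from~\cite{li2019coordinatewise}, applied to $H_{VV}$, the global minimizers of this restricted problem are $\pm\ckopt$. In particular, $f(\ckopt)\le f(c^{(l)})$ for every $l$, because each iterate is supported in $V$.

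Combining this with Lemma~\ref{lemma:gap}, for $l$ large we get
\[
f(c)\le f(\ckopt)\le f(c^{(l)})<f(c)+\frac{8n_z^2\tau^2}{\mu_1}.
\]
Hence $f(\ckopt)-f(c)=O(\tau^2)$.

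We then turn this small gap into membership in $D^{\pm}$, handling the two defining conditions separately.

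\textbf{Sublevel condition.} Set $m:=\min_{y\in\partial B^{\pm}}f(y)$. Lemma~\ref{lem:strong_convexity_constants} gives strong $\|\cdot\|_2$-convexity on $B^{\pm}$, and $\nabla f(c)=0$. For $y\in\partial B^{\pm}$ this yields $f(y)\ge f(c)+\frac{\mu_2}{2}r^2$, where $r=\frac{1}{30}\min(-2\lambda,-\lambda+\lambda_2)/\sqrt{-\lambda}$. So $m-f(c)\ge \mu_2r^2/2>0$, a constant independent of $\tau$. Once $8n_z^2\tau^2/\mu_1<\mu_2r^2/2$, we obtain $f(\ckopt)<m$.

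\textbf{Ball condition.} This is the main obstacle: a small value of $f$ does not by itself place a point inside $B^{\pm}$, since $f$ is nonconvex globally. We would argue by trapping. The iterates $c^{(l)}$ start in $D^{\pm}$, and coordinate descent with exact line search does not increase $f$. Because $f(c^{(l)})\le f(c^{(0)})\le m$ and $B^{\pm}$ is enclosed by a boundary on which $f\ge m$, a continuity argument keeps every $c^{(l)}$ in $B^{\pm}$. The compression only alters $b$, and the paper's setup says the coordinate values are updated exactly, so this monotonicity should still hold. Here we must check that each one-coordinate line-search step stays in the same connected component of the sublevel set. If $f(c^{(l+1)})<m$ strictly, the segment between consecutive iterates cannot cross $\partial B^{\pm}$. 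If that segment argument fails, we fall back on the strong convexity along the segment inside $B^{\pm}$.

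It remains to place $\ckopt$ itself in the ball, and for this we compare it with the iterates restricted to $V$. Both $c^{(l)}$ and $\ckopt$ lie in the subspace $\mathbb{R}^V$. On $B^{\pm}\cap\mathbb{R}^V$, $f$ is strongly convex, and its only critical point there, if any, is $\pm\ckopt$. The sublevel set $\{x\in\mathbb{R}^V: f(x)\le f(c^{(l)})\}$ contains $c^{(l)}\in B^{\pm}$. Since $f(c^{(l)})<m$, the connected component of this set containing $c^{(l)}$ stays strictly inside $B^{\pm}\cap\mathbb{R}^V$. That component is compact, so $f$ attains a minimum on it. This minimizer is a critical point of the restricted problem, namely $\pm\sqrt{-\mu}\,v$ for some eigenpair $(\mu,v)$ of $H_{VV}$, and it has value at most $f(c^{(l)})<f(c)+O(\tau^2)$.

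The restricted critical values are $\|H\|_F^2-\mu^2$, while $f(c)=\|H\|_F^2-\lambda^2$. Eigenvalue interlacing gives $\mu\ge\lambda_1$ for every non-lowest eigenvalue of $H_{VV}$, so every non-ground critical point has value at least $f(c)+\lambda^2-\lambda_1^2$. This exceeds $f(c)+O(\tau^2)$ once $\tau$ is small, which forces the minimizer to be $\pm\ckopt$. Hence $\ckopt$ lies in $B^{\pm}$ with $f(\ckopt)\le m$, that is, $\ckopt\in D^{\pm}$, with the sign chosen to match that of $c^{(0)}$.
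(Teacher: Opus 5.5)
Your proof is correct in outline, but the ball condition is handled by a different route from the paper's. The first half matches the paper. You reduce to $\|H_{VV}+zz^\top\|_F^2$, obtain $f(\ckopt)\le f(c^{(l)})<f(c)+8n_z^2\tau^2/\mu_1$ from \cref{lemma:gap}, and compare with $\min_{\partial B^\pm}f$. Your bound $\min_{\partial B^\pm}f-f(c)\ge\mu_2r^2/2$ is an explicit version of the paper's ``positive constant''.

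\emph{How the paper gets ball membership.} It argues variationally on all of $\mathbb{R}^n$. Since $f$ is coercive, its minimum over the closed set outside the open balls is attained either on $\partial B^\pm$ or at a stationary point of $f$, namely $0$ or $\pm\sqrt{-\lambda'}\,x$ with $\lambda'>\lambda$. A second smallness condition on $\tau$ puts $f(\ckopt)$ below all these values, so $\ckopt$ cannot lie outside the balls. So your stated ``main obstacle'' is not really one: a value below $\min_{\partial B^\pm}f$ and below every non-ground stationary value does force a point into $B^+\cup B^-$. This route needs neither interlacing nor any information about where the iterates are.

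\emph{What your route does and buys.} You work on $\mathbb{R}^V$ instead. You take an iterate in $B^\pm$ with small value, trap its sublevel component inside $B^\pm\cap\mathbb{R}^V$, extract a restricted critical point there, and use Cauchy interlacing to rule out the excited restricted critical points. This gives slightly more than the paper. It shows $\pm\ckopt$ is the unique restricted critical point in $B^\pm\cap\mathbb{R}^V$ and that it lies in the same ball as the late iterates. That is exactly the sign choice the proof of \cref{theorem:linear-convergence} makes ``without loss of generality''.

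\emph{The trapping step.} Your route needs $c^{(l)}\in B^\pm$ for large $l$, and your own justification of this does not work. Exact line search minimizes a quartic along the coordinate line, and that quartic can have two wells. The returned point may lie in the far well, so the segment from $c^{(l)}$ to $c^{(l+1)}$ can cross $\partial B^\pm$, where $f\ge m$, even though both endpoint values are below $m$. Endpoint values say nothing about the segment. Falling back on strong convexity inside $B^\pm$ also fails: it controls only the local minimizer along the line within the ball, not the global one the line search returns. The fix is to cite rather than re-prove this. The invariance $c^{(l)}\in D^\pm$ is already asserted (from the uncompressed analysis) in the proof of \cref{lemma:gap}, and that proof's strong-convexity step at $c^{(l)}$ needs it anyway.

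\emph{Small fixes in the last step.} The restricted critical points also include $0$, with value $\|H\|_F^2=f(c)+\lambda^2$. The separation constant should be $\lambda^2-\min(\lambda_1,0)^2$, not $\lambda^2-\lambda_1^2$, since the latter need not be positive when $\lambda_1\ge 0$. The same comparison with $f(0)$ shows $\lambdakopt<0$ for small $\tau$. You use this tacitly when you take $\pm\ckopt$, rather than $0$, as the restricted global minimizers.
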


\begin{theorem}
\label{theorem:linear-convergence}
    From \cref{lemma:within-area}, it follows that the algorithm with compression converges to  $\ckopt$ or $-\ckopt$  and still exhibits linear convergence, that is:
\[f(c^{(l+1)})-f(\ckopt)\leq(1-\frac{\mu_1}{L})(f(c^{(l)})-f(\ckopt)).\]
\end{theorem}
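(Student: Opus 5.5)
The plan is to reduce the compressed iteration to plain greedy coordinate descent on a restricted problem posed on the maximal support set \(V\), and then apply the standard linear-convergence argument for strongly convex functions inside \(D^{\pm}\).

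\textbf{Step 1: reduction to a restricted problem.} By \cref{lemma:support_set}, \(V\) is finite and the sets \(V_l\) are nested. Hence there is an index \(l_0\) with \(V_l=V\) for all \(l\ge l_0\). After \(l_0\), every coordinate ever touched by \(c\) lies in \(V\). As noted in Section 2, a coordinate in \(V\) has an exactly maintained \(b_j\), so \(b_j=(Hc)_j=(H_{VV}c_V)_j\) for \(j\in V\).

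Define the restricted objective
\[
f_V(x)=\|H_{VV}+xx^{\top}\|_F^2, \qquad x\in\mathbb{R}^{|V|}.
\]
For \(l\ge l_0\), the CDFCI step coincides with a greedy coordinate step with exact line search on \(f_V\), started from \(c_V^{(l_0)}\). The main obstacle is the selection rule. The argmax is taken over all \(j\), and an index \(j\notin V\) could in principle have the largest \(|4b_j|\). But \(j\notin V\) means \(j\) is never selected after \(l_0\), since selection would make it enter the support. So for \(l\ge l_0\) the selected index always lies in \(V\). If the selection is not exactly greedy on \(V\), I would fall back to the bound \(|\nabla_{i}f_V|\ge \|\nabla f_V\|_\infty\) restricted to \(V\). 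I would justify this by noting that entries \(b_j\) with \(j\notin V\) are either compressed (equal to zero) or stale.

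\textbf{Step 2: the restricted problem inherits the local geometry.} Apply the stationary-point characterization of~\cite{li2019coordinatewise} to \(H_{VV}\). Its minimizers are \(\pm\ckopt\), with \(\|\ckopt\|_2^2=-\lambdakopt\). By \cref{lemma:within-area}, for small \(\tau\) the point \(\ckopt\) lies in \(D^{\pm}\), embedded by zero-extension. Moreover, \(f_V(x)=f(\tilde x)-\mathrm{const}\), where \(\tilde x\) is the zero-extension, because
\[
\|H+\tilde x\tilde x^{\top}\|_F^2=\|H_{VV}+xx^{\top}\|_F^2+\|H\|_F^2-\|H_{VV}\|_F^2.
\]
Therefore the strong convexity of \cref{lem:strong_convexity_constants} on \(B^{\pm}\) transfers to \(f_V\) on the corresponding slice, with the same \(\mu_1\). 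Since the iterates decrease \(f\) and start in \(D^{\pm}\), they stay in \(D^{\pm}\), which is a sublevel set inside \(B^{\pm}\). \Cref{lemma:gap} places them near the minimum value. Combined with \cref{lemma:within-area}, this shows that \(\ckopt\), and not the other sign, is the minimizer of \(f_V\) within that region.

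\textbf{Step 3: the standard descent estimate.} Let \(L\) be the coordinatewise Lipschitz constant of \(\nabla f\) on \(D^{\pm}\), which is finite because \(D^{\pm}\) is bounded. Exact line search along the chosen coordinate \(i\) gives
\[
f(c^{(l+1)})\le f(c^{(l)})-\frac{1}{2L}|\nabla_i f_V(c^{(l)})|^2=f(c^{(l)})-\frac{1}{2L}\|\nabla f_V(c^{(l)})\|_\infty^2 .
\]
Strong \(\|\cdot\|_1\)-convexity, whose dual norm is \(\|\cdot\|_\infty\), gives the Polyak--Łojasiewicz-type bound
\[
f_V(x)-f_V(\ckopt)\le \frac{1}{2\mu_1}\|\nabla f_V(x)\|_\infty^2 .
\]
Substituting this into the descent estimate yields
\[
f(c^{(l+1)})-f(\ckopt)\le\Bigl(1-\frac{\mu_1}{L}\Bigr)\bigl(f(c^{(l)})-f(\ckopt)\bigr),
\]
where I use the fact that the constant shift cancels in the differences. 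Finally, strong convexity converts decay of the function gap into convergence of the iterates to \(\pm\ckopt\).
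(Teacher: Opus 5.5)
Your proposal is correct and follows essentially the same route as the paper. You restrict to the support $V$ (the paper's ``first $k$ coordinates,'' which you formalize as $f_V$ via zero-extension) and use strong $\|\cdot\|_1$-convexity on $D^{\pm}$ to get $f(c^{(l)})-f(\ckopt)\le\frac{1}{2\mu_1}\max_{j\in V}|\nabla_j f(c^{(l)})|^2$. You then identify this maximum with the greedily selected coordinate and combine it with the descent bound $f(c^{(l+1)})\le f(c^{(l)})-\frac{1}{2L}(\nabla_{j_l}f(c^{(l)}))^2$, exactly as the paper does. Your stabilization index $l_0$ makes explicit what the paper leaves implicit. The ``fallback'' hedge in Step 1 is unnecessary: a maximizer over all $j$ that lies in $V$ is automatically a maximizer over $V$, where the selection score equals $|\nabla_j f|$.
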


The proofs of the above results are deferred to Section~4.

\subsection{Analysis of the convergence points}

We now analyze the convergence points. Without loss of generality, we assume that the first \(k\) coordinates are selected during the iteration process. Let \(\ckopt_1 \in \mathbb{R}^k\) denote the sub-vector of \(\ckopt\) corresponding to these selected coordinates.

In addition, since the Hamiltonian matrix \(H\) is extremely sparse and each row contains at most \(n_z\) nonzero entries, we consider the regime \(k > n_z\), which is the regime considered in the subsequent analysis. Accordingly, we partition the matrix \( H \in \mathbb{R}^{n \times n} \) as
\[
H = 
\begin{bmatrix}
H_{11} & H_{12} \\
H_{21} & H_{22}
\end{bmatrix},
\]
where \( H_{11} \in \mathbb{R}^{k \times k} \) corresponds to the block of selected coordinates. Let \( \lambdakopt \) denote the smallest eigenvalue of \( H_{11} \).

It follows from the proof of \cref{theorem:linear-convergence} that the sub-vector \( \ckopt_1 \) is an eigenvector of \( H_{11} \) associated with \( \lambdakopt \), and satisfies the condition \( \|\ckopt_1\|_2 = \sqrt{-\lambdakopt} \).

Since the support of \( \ckopt \) is fixed to the first \( k \) indices, it follows that for all \( i > k \), the coordinate \( i \) is never selected. This implies that during the CDFCI process, we have the following bound:
\begin{equation}
    | (H \ckopt)_i | = \left| \sum_{j=1}^k H_{ij} (\ckopt_1)_j \right| < n_z \tau.
\end{equation}
This inequality holds because whenever the auxiliary vector \( b \) is updated, the value \( b_i \) is compressed to zero due to the thresholding rule. Therefore, the coordinate \( i \) is excluded from selection.

To analyze the convergence, we introduce structural assumptions on the Hamiltonian and its ground-state eigenvector. These assumptions allow us to quantify the effect of the compression strategy.

\begin{assumption}\label{assumption:Spectral-gap}
We assume that the matrix \( H \in \mathbb{R}^{n \times n} \) has a positive spectral gap. Specifically, let \( \lambda \) be the smallest eigenvalue of \( H \). We define the gap as
\[
\operatorname{gap}(H) := \min_{\lambda_H \in \sigma(H), \, \lambda_H \neq \lambda} |\lambda - \lambda_H| > 0.
\]
\end{assumption}

\begin{assumption}\label{assumption:Exponential-decay}
Let \( c \in \mathbb{R}^n \) be the eigenvector corresponding to the smallest eigenvalue \( \lambda \). We assume that \( c \) exhibits exponential decay. Specifically, if we rearrange the indices such that the entries of \( c \) are sorted by magnitude (i.e., \( |c_1| \ge |c_2| \ge \dots \ge |c_n| \)), then there exist constants \( N > 0 \) and \( P > 0 \) such that
\[
|c_j| \leq |c_N| e^{-P(j-N)}, \quad \forall j \geq N.
\]
\end{assumption}

As a direct consequence of \cref{assumption:Exponential-decay}, we establish a technical lemma that provides norm estimates for exponentially decaying vectors. This result will be repeatedly used to control tail contributions in the analysis.

\begin{lemma}
\label{lemma:exp-decay}
Let $ x \in \mathbb{R}^n $ be a vector with exponential decay, meaning that there exist constants $N > 0$ and $P > 0$ such that
\[
|(x)_j| < |(x)_N| e^{-P(j - N)}, \quad \forall j \geq N.
\]
Then there exists a constant $Q > 0$, depending only on the $N$ and $P$ and independent of the dimension  $n$, such that
\[
\|x\|_s < Q \|x\|_\infty, \quad \text{for } s = 1 \text{ or } 2.
\]
\end{lemma}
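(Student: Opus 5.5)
We split the index range at $N$ and bound the head and the tail separately. As in \cref{assumption:Exponential-decay}, we take the entries of $x$ to be ordered by decreasing magnitude, $|x_1|\ge |x_2|\ge\cdots\ge|x_n|$. This is implicit in the hypothesis, and in any case the norms $\|x\|_1,\|x\|_2,\|x\|_\infty$ are invariant under permutation of the indices. With this ordering $\|x\|_\infty=|x_1|$ and $|x_N|\le\|x\|_\infty$. The only data entering the constants are $N$ and $P$, so the resulting $Q$ will not depend on $n$. We take $N$ to be an integer, replacing it by $\lceil N\rceil$ if necessary; the decay inequality is only used at integer indices $j\ge N$.

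\textbf{Head.} Each of the first $N-1$ entries is bounded by $\|x\|_\infty$. Hence
\[
\sum_{j<N}|x_j|\le (N-1)\|x\|_\infty,
\qquad
\sum_{j<N}|x_j|^2\le (N-1)\|x\|_\infty^2 .
\]

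\textbf{Tail.} For $j\ge N$ we use the hypothesis $|x_j|<|x_N|e^{-P(j-N)}\le \|x\|_\infty e^{-P(j-N)}$. Summing the geometric series, extended to infinity so that the bound does not depend on $n$, gives
\[
\sum_{j\ge N}|x_j|\le \frac{\|x\|_\infty}{1-e^{-P}},
\qquad
\sum_{j\ge N}|x_j|^2\le \frac{\|x\|_\infty^2}{1-e^{-2P}} .
\]
Strictly, the hypothesis at $j=N$ reads $|x_N|<|x_N|$, which is impossible unless the inequality is read as non-strict there. So we treat it as $\le$ at $j=N$ and strict for $j>N$.

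\textbf{Combining.} Adding the two pieces yields
\[
\|x\|_1\le \Bigl(N-1+\frac{1}{1-e^{-P}}\Bigr)\|x\|_\infty,
\qquad
\|x\|_2\le \Bigl(N-1+\frac{1}{1-e^{-2P}}\Bigr)^{1/2}\|x\|_\infty .
\]
We choose $Q$ to be any number strictly larger than both constants, for example
\[
Q=N+\frac{1}{1-e^{-P}} .
\]
This exceeds both, since $1/(1-e^{-2P})\le 1/(1-e^{-P})$ and $\sqrt{a}\le a$ for $a\ge1$. The strict inequality $\|x\|_s<Q\|x\|_\infty$ then holds whenever $x\neq0$. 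For $x=0$ both sides vanish, so the statement is meant for nonzero $x$; the hypothesis itself also fails for $x=0$.

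There is no real obstacle in this argument. The only points needing care are the ordering convention and the degenerate cases $j=N$ and $x=0$ relative to the strict inequalities.
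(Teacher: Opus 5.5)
Your proof is correct and follows the paper's argument: you bound the leading block by a multiple of $\|x\|_\infty$ and the tail by a geometric series, and your $s=1$ constant $N-1+\frac{1}{1-e^{-P}}$ is exactly the paper's $Q_1=N+\frac{e^{-P}}{1-e^{-P}}$. Your extra care is a small improvement over the paper's writeup: you choose $Q$ strictly larger to get the strict inequality, exclude $x=0$, read the hypothesis as non-strict at $j=N$, and write out the $s=2$ constant explicitly. The decreasing-order convention you invoke is not needed (and permuting could break the decay hypothesis), since $|x_N|\le\|x\|_\infty$ holds in any ordering.
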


From \cref{lemma:exp-decay} we can get $\Vert c\Vert_1^2<Q^2\Vert c\Vert_\infty^2\leq Q^2\Vert c\Vert_2^2=-\lambda Q^2.$

Next, we derive a preliminary bound on the eigenvalue error \(\Delta\lambda\).

\begin{lemma}
\label{lemma:O(tau)}
Let \(\Delta \lambda = |\lambda - \lambdakopt|\). Recall from the previous section that for all unselected coordinates \(i\) (where \(k+1 \leq i \leq n\)), the matrix-vector product satisfies \(\vert H_{ij}(\ckopt_1)_j\vert < \tau\). Under this condition, we have
\begin{equation}
    \Delta\lambda < \frac{n_z \|c\|_1 \tau}{-2\lambda} < \frac{n_z Q \tau}{2\sqrt{-\lambda}}.
\end{equation}
\end{lemma}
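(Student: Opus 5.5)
We compare the two Rayleigh quotients through the residual of the zero-padded restricted eigenvector; orthogonality of that residual to the padded vector is what makes the estimate first order in \(\tau\). Let \(\hat c\in\mathbb{R}^n\) be \(\ckopt_1\) padded with zeros. Then
\[
H\hat c = \begin{bmatrix} H_{11}\ckopt_1 \\ H_{21}\ckopt_1\end{bmatrix}
= \lambdakopt \hat c + r,
\qquad r := \begin{bmatrix} 0 \\ H_{21}\ckopt_1\end{bmatrix}.
\]
By the compression bound above, every entry of \(r\) satisfies \(|r_i|<n_z\tau\), since each \((H\ckopt)_i\) with \(i>k\) is a sum of at most \(n_z\) terms, each below \(\tau\). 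Eigenvalue interlacing (Cauchy) gives \(\lambda\le\lambdakopt\), so \(\Delta\lambda=\lambdakopt-\lambda\ge 0\) and no absolute values are needed.

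First, take the inner product of the residual identity with the full ground-state eigenvector \(c\), normalized as \(\|c\|_2^2=-\lambda\). Using symmetry of \(H\),
\[
\lambda\, c^\top\hat c = c^\top H\hat c = \lambdakopt\, c^\top\hat c + c^\top r ,
\]
which rearranges to
\[
\Delta\lambda\,|c^\top \hat c| = |c^\top r| \le \|r\|_\infty \|c\|_1 < n_z\tau\|c\|_1 .
\]

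Second, bound \(|c^\top\hat c|\) from below by \(-2\lambda\) up to a harmless constant. For this I would use \cref{lemma:within-area} and the fact, from the proof of \cref{theorem:linear-convergence}, that \(\hat c=\ckopt\in D^{\pm}\subset B^{\pm}\). After choosing the sign so that \(\hat c\) is near \(+c\), we have \(\|\hat c-c\|_2\le \delta\) with \(\delta=\frac{1}{30}\min(-2\lambda,-\lambda+\lambda_2)/\sqrt{-\lambda}\le \frac{1}{15}\sqrt{-\lambda}\). Then
\[
c^\top\hat c = \tfrac12\big(\|c\|_2^2+\|\hat c\|_2^2-\|c-\hat c\|_2^2\big)
= \tfrac12\big(-\lambda-\lambdakopt-\|c-\hat c\|_2^2\big).
\]
Since \(-\lambdakopt = -\lambda - \Delta\lambda\), this lower bound is close to \(-\lambda\), not \(-2\lambda\). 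The identity above therefore only yields \(\Delta\lambda\lesssim n_z\tau\|c\|_1/(-\lambda)\), off by a factor of \(2\) from the stated bound. This lower bound on the overlap is the main obstacle.

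To recover the factor \(2\), I would switch to the objective-value viewpoint. One has \(f(\pm\sqrt{-\mu}\,v)=\|H\|_F^2-\mu^2\) for a normalized eigenpair \((\mu,v)\), so
\[
f(\ckopt)-f(c) = \lambda^2-(\lambdakopt)^2 = \Delta\lambda\,(-\lambda-\lambdakopt),
\]
where the factor \(-\lambda-\lambdakopt\) is close to \(-2\lambda\). I would then bound \(f(\ckopt)-f(c)\) by a first-order expansion of \(f\) around \(\ckopt\), whose gradient is \(4r\), paired against \(c-\hat c\) and bounded in \(\ell_\infty/\ell_1\). That would give \(\Delta\lambda\cdot(-2\lambda)\lesssim n_z\tau\|c\|_1\) times a constant; the stated constant may rely on the strong-convexity term being of favorable sign.

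The final inequality then follows from \(\|c\|_1<Q\|c\|_\infty\le Q\sqrt{-\lambda}\), which is \cref{lemma:exp-decay}. Dividing by \(-2\lambda\) gives \(\frac{n_zQ\tau}{2\sqrt{-\lambda}}\).
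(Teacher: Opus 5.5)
Your first step is the paper's argument. The paper gets $\|H\ckopt+(\ckoptT\ckopt)\ckopt\|_\infty\le n_z\tau$ by passing to the limit in the gradient bound; you read the same bound off $r=(0;H_{21}\ckopt_1)$ directly, which is cleaner. Both then pair it with $c$ via H\"older and divide by a lower bound on $c^\top\ckopt$ obtained from $\ckopt\in D^{\pm}$.

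The genuine gap is your last paragraph: it does not recover the factor $2$, and in fact loses more. Since $\nabla f(\hat c)=4(H\hat c-\lambdakopt\hat c)=4r$ and $r^\top\hat c=0$, convexity on $B^{+}$ yields only
\[
\Delta\lambda\,(-\lambda-\lambdakopt)=f(\hat c)-f(c)\le-4\,c^\top r\le 4n_z\tau\|c\|_1 .
\]
Because $-\lambda-\lambdakopt\le-2\lambda$, this gives at best $\Delta\lambda\lesssim 2n_z\tau\|c\|_1/(-\lambda)$. That is at least four times the stated constant, and roughly a factor of two worse than your own first estimate.

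The loss is structural, not bookkeeping. By your first identity, $-4c^\top r=4\Delta\lambda\,c^\top\hat c\approx 4(-\lambda)\Delta\lambda$, while the gap is only $\approx 2(-\lambda)\Delta\lambda$. So the linear term overshoots by a factor of two, and the strong-convexity remainder absorbs the difference. You cannot exploit that remainder, because nothing bounds $\|c-\hat c\|_2$ from below. The closing \emph{divide by $-2\lambda$} is therefore unjustified.

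The paper's proof does not reach the constant $\frac12$ either. It uses $c^\top\ckopt\ge\|c\|_2^2-d\|c\|_2>\frac12\|c\|_2^2$ and concludes $\Delta\lambda<\frac{2n_z\|c\|_1\tau}{-\lambda}<\frac{2n_zQ\tau}{\sqrt{-\lambda}}$, which is four times the stated bound. That weaker form is what the proof of \cref{lemma:norm-of-c2} actually invokes.

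Your second step, made rigorous, already does better. By Cauchy--Schwarz, $c^\top\hat c=\|c\|_2^2+c^\top(\hat c-c)\ge\|c\|_2^2-d\|c\|_2\ge\frac{14}{15}(-\lambda)$, using $d\le\frac{1}{15}\sqrt{-\lambda}$. Your polarization identity gives the same once you insert $\|\hat c\|_2\ge\|c\|_2-d$; the phrase \emph{close to $-\lambda$} needs this a priori control. Hence $\Delta\lambda<\frac{15}{14}\,\frac{n_z\|c\|_1\tau}{-\lambda}<\frac{15}{14}\,\frac{n_zQ\tau}{\sqrt{-\lambda}}$.

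This is essentially the limit of the method, since $c^\top\hat c\le\|c\|_2\|\hat c\|_2\le-\lambda$. Getting down to $\frac12$ would need a sharper numerator, for example $\|c_2\|_1$ in place of $\|c\|_1$ (legitimate, because $r$ vanishes on the first $k$ coordinates). That would also require a tail estimate for $c_2$, which is only available downstream, since \cref{lemma:norm-of-c2} itself relies on this lemma. Drop the objective-value paragraph and state the $O(\tau)$ bound with the constant $\frac{15}{14}$ (or the paper's $2$); that is all the later results need.
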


We now analyze the structure of the true eigenvector \(c\). Consistent with the partition of \(H\), we split \(c\) into two parts: \(c_1 \in \mathbb{R}^k\), which contains the first \(k\) components, and \(c_2 \in \mathbb{R}^{n-k}\), which contains the remaining components. This leads to the following lemma regarding the magnitude of the tail component \( c_2 \).

\begin{lemma}
\label{lemma:norm-of-c2}
Under~\cref{assumption:Spectral-gap} and ~\cref{assumption:Exponential-decay}, we have
\begin{equation}
    \|c_2\|_\infty < \frac{2 m n_z \sqrt{k} \tau}{\operatorname{gap}(H)},
\end{equation}
where the constant \( m \) is defined as
\[
    m = \sqrt{\frac{-\lambdakopt \|c\|_1^2}{4k\lambda^2} + 1}.
\]
Note that \( m \) is bounded by \( \sqrt{\frac{-\lambdakopt Q^2}{4k\lambda} + 1} \) and is independent of the dimension \( n \).
\end{lemma}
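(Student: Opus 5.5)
We plan to bound $c_2$ by treating the extended vector $\tilde c := (\ckopt_1, 0)$ as an approximate eigenvector of the full matrix $H$ and then applying the spectral gap (\cref{assumption:Spectral-gap}). The key steps are: compute the residual of $\tilde c$, split $\tilde c$ along the exact eigenvector $c$ and its orthogonal complement, bound the complement by the residual divided by the gap, and finally read off $\|c_2\|_\infty$ from the fact that $\tilde c$ vanishes outside the first $k$ coordinates.

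\textbf{Residual.} Since $H_{11}\ckopt_1=\lambdakopt\ckopt_1$, the first block of $(H-\lambdakopt I)\tilde c$ vanishes. The second block equals $H_{21}\ckopt_1$, and by the thresholding bound derived above each of its entries is below $n_z\tau$. Hence
\[
r := (H-\lambda I)\tilde c = \bigl((\lambdakopt-\lambda)\ckopt_1,\; H_{21}\ckopt_1\bigr).
\]
The second block of $r$ has $\ell_\infty$ norm below $n_z\tau$. The first block is controlled by \cref{lemma:O(tau)}, since $\Delta\lambda<n_z\|c\|_1\tau/(-2\lambda)$ and $\|\ckopt_1\|_2=\sqrt{-\lambdakopt}$.

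\textbf{Splitting.} Write $\tilde c=\alpha \hat c + w$ with $\hat c=c/\|c\|_2$ and $w\perp c$. Since $(H-\lambda I)\hat c=0$, we have $r=(H-\lambda I)w$. Because $w$ lies in the span of eigenvectors with eigenvalues at distance at least $\operatorname{gap}(H)$ from $\lambda$, this gives $\|w\|_2\le \|r\|_2/\operatorname{gap}(H)$. On coordinates $i>k$ we have $\tilde c_i=0$, so $\alpha (\hat c_2)_i=-w_i$, and therefore $|\alpha|\,\|\hat c_2\|_\infty\le\|w\|_2$. We also need $|\alpha|$ bounded below, and intend to obtain this from $\|\tilde c\|_2^2=\alpha^2+\|w\|_2^2$ with $\|w\|_2$ small when $\tau$ is small. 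This recovers $c_2$ after rescaling by $\|c\|_2=\sqrt{-\lambda}$.

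\textbf{Main obstacle.} The hard part is converting the entrywise residual bounds into the stated $\ell_2$ quantity with the precise constant $m\sqrt{k}$. The $H_{21}\ckopt_1$ block has potentially $n-k$ entries, so a naive $\ell_2$ bound is dimension dependent. I would instead use sparsity: only rows adjacent to the first $k$ indices are nonzero, at most $k n_z$ of them, and I would attempt a cruder grouping to reach $\sqrt{k}\,n_z\tau$. More plausibly, I would bound $\|r\|_2^2\le k\,(\Delta\lambda)^2\|\cdot\|^2+k n_z^2\tau^2$-type terms, which after factoring $k n_z^2\tau^2$ produces exactly
\[
m^2=\frac{-\lambdakopt\|c\|_1^2}{4k\lambda^2}+1.
\]
Here the first term comes from $(\Delta\lambda)^2\|\ckopt_1\|_2^2\le n_z^2\|c\|_1^2\tau^2(-\lambdakopt)/(4\lambda^2)$. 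The factor $2$ in the statement would then absorb the normalization, using $|\alpha|\ge\tfrac12\|\tilde c\|_2$ for small $\tau$ and $\|\tilde c\|_2\approx\|c\|_2$. The final remark that $m$ is independent of $n$ follows from \cref{lemma:exp-decay}, namely $\|c\|_1^2<-\lambda Q^2$.
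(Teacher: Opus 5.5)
Your architecture is the paper's. The paper's least-squares solution $x=(\lambda I-H)^\dagger\bigl(\Delta\lambda\,\ckopt_1,\,-H_{21}\ckopt_1\bigr)$ equals $-P^\perp(c-\ckopt)=P^\perp\ckopt$, which is exactly your $w$. Both arguments bound it by $\|r\|_2/\operatorname{gap}(H)$ and then compare entries on the coordinates $i>k$.

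\textbf{The gap.} It is the step you flag as the main obstacle and then only assert: $\|H_{21}\ckopt_1\|_2\le\sqrt{k}\,n_z\tau$. The route you describe (at most $kn_z$ nonzero rows, each entry below $n_z\tau$) gives only $\|H_{21}\ckopt_1\|_2^2<kn_z^3\tau^2$. That is a factor $n_z$ too large, so it cannot produce the stated $m$. The row-sum bound $n_z\tau$ alone is not enough. You need the per-product compression bound $|H_{ij}(\ckopt_1)_j|<\tau$ together with both row and column sparsity; columns also have at most $n_z$ nonzeros because $H$ is symmetric. Set $a_i=\sum_{j\le k}|H_{ij}(\ckopt_1)_j|$ for $i>k$. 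Then $a_i\le n_z\tau$, and
\[
\sum_{i>k}a_i=\sum_{j\le k}\sum_{i>k}|H_{ij}(\ckopt_1)_j|\le kn_z\tau .
\]
Hence
\[
\|H_{21}\ckopt_1\|_2^2\le\sum_{i>k}a_i^2\le\Bigl(\max_{i>k}a_i\Bigr)\sum_{i>k}a_i\le k n_z^2\tau^2 .
\]
The paper does the equivalent count: expanding the square leaves at most $kn_z^2$ nonzero products $H_{ij_1}H_{ij_2}(\ckopt_1)_{j_1}(\ckopt_1)_{j_2}$, each below $\tau^2$. With this, your estimate $\|r\|_2^2\le(\Delta\lambda)^2\|\ckopt_1\|_2^2+kn_z^2\tau^2<kn_z^2m^2\tau^2$ goes through. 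Note there is no factor $k$ on the first term.

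\textbf{The lower bound on $|\alpha|$.} This also needs tightening. The chain ``$|\alpha|\ge\frac12\|\tilde c\|_2$ and $\|\tilde c\|_2\approx\|c\|_2$'' points the wrong way: by interlacing, $\|\tilde c\|_2=\sqrt{-\lambdakopt}\le\sqrt{-\lambda}=\|c\|_2$. Appealing to ``$\tau$ small'' through $\|w\|_2<m\sqrt{k}\,n_z\tau/\operatorname{gap}(H)$ is also awkward, since $k$ grows as $\tau\to0$.

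No extra smallness is needed. By \cref{lemma:within-area}, $\|\tilde c\mp c\|_2\le d\le\frac{1}{15}\|c\|_2$. So $|\alpha|=|\hat c^\top\tilde c|\ge\|c\|_2-d>\frac12\|c\|_2$. Therefore
\[
\|c_2\|_\infty=\|c\|_2\|\hat c_2\|_\infty\le\frac{\|c\|_2\|w\|_\infty}{|\alpha|}<2\|w\|_2 .
\]
This is exactly what the paper's contradiction argument achieves via $\|x\|_2\le\|c-\ckopt\|_2<d$.
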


Building on the analysis of \( c_2 \), we now establish a refined estimate for the eigenvalue error. We show that the energy error is bounded by a quantity proportional to \( \tau^2 \).

\begin{theorem}
\label{theorem: O(tau^2)}
    Under Assumptions \ref{assumption:Spectral-gap} and \ref{assumption:Exponential-decay}, the eigenvalue error satisfies the following bound:
    \begin{equation}
        \Delta \lambda = |\lambda - \lambdakopt| < \frac{4 m n_z^2 \sqrt{k} Q \tau^2}{-\lambda \operatorname{gap}(H)}.
    \end{equation}
\end{theorem}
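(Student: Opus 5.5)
We will prove the bound by comparing Rayleigh quotients through a residual identity that is quadratic in the tail \(c_2\). The ingredients are the bound on \(\|c_2\|_\infty\) from \cref{lemma:norm-of-c2} and the residual bound \(|(H\ckopt)_i|<n_z\tau\) for unselected \(i\). Write \(\hat c=(\ckopt_1,0)\in\mathbb{R}^n\). Then \(H_{11}\ckopt_1=\lambdakopt\ckopt_1\) and \(H_{21}\ckopt_1=r\) with \(\|r\|_\infty<n_z\tau\), so
\[
H\hat c-\lambdakopt\hat c=\begin{bmatrix}0\\ r\end{bmatrix}.
\]

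First, take the inner product of this identity with the true eigenvector \(c=(c_1,c_2)\). Since \(H\) is symmetric and \(Hc=\lambda c\), the left side becomes \(c^\top H\hat c-\lambdakopt c^\top\hat c=(\lambda-\lambdakopt)\,c_1^\top\ckopt_1\). The right side is \(c_2^\top r\), so
\[
(\lambda-\lambdakopt)\,c_1^\top\ckopt_1=c_2^\top r .
\]
The gain over \cref{lemma:O(tau)} is visible here: \(r\) is \(O(\tau)\) and \(c_2\) is \(O(\tau)\) by \cref{lemma:norm-of-c2}, so the right side is \(O(\tau^2)\).

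Second, bound the right side using the sparsity pattern instead of a naive norm product. Write \(c_2^\top r=c_2^\top H_{21}\ckopt_1=\sum_{j\le k}(\ckopt_1)_j(H_{12}c_2)_j\). Each column of \(H\) has at most \(n_z\) nonzeros. Using \(|H_{ij}(\ckopt_1)_j|<\tau\) as in \cref{lemma:O(tau)}, one gets a bound of the form
\[
|c_2^\top r|\le n_z\tau\,\|c_2\|_1\quad\text{or}\quad |c_2^\top r|\le n_z\tau\,\|c_2\|_\infty\cdot(\text{count}).
\]
The target constant has a single \(Q\) and a single power of \(\|c_2\|_\infty\). I therefore expect to use \(\|c_2\|_1\le\|c\|_1\)-type control combined with \(\|c_2\|_\infty\), giving
\[
|c_2^\top r|\le n_z\tau\cdot Q\cdot\|c_2\|_\infty\cdot(\text{const}).
\]
Here \cref{lemma:exp-decay} applied to the tail, or the bound \(\|c\|_1\le Q\|c\|_\infty\), supplies \(Q\).

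Third, bound the factor \(c_1^\top\ckopt_1\) from below by roughly \(-\lambda/2\) times appropriate normalizations, with the sign fixed. Both vectors are near-aligned: \(\|c\|_2^2=-\lambda\) and \(\|\ckopt_1\|_2^2=-\lambdakopt\). By \cref{lemma:within-area} and the convergence proven in \cref{theorem:linear-convergence}, \(\hat c\) lies in \(D^{\pm}\), the small ball around \(\pm c\), so the inner product is at least about half of \(-\lambda\). This produces the denominator \(-\lambda\) up to a factor of 2.

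Finally, combine these with \(\|c_2\|_\infty<2mn_z\sqrt{k}\tau/\operatorname{gap}(H)\) to get
\[
\Delta\lambda\lesssim\frac{n_zQ\tau\cdot 2mn_z\sqrt k\tau}{\operatorname{gap}(H)\cdot(-\lambda/2)}=\frac{4mn_z^2\sqrt kQ\tau^2}{-\lambda\operatorname{gap}(H)}.
\]
The main obstacle is the constant bookkeeping in the second and third steps. Specifically, the second step must produce exactly \(n_zQ\tau\|c_2\|_\infty\) without spurious dimension factors, and the third must justify the lower bound \(c_1^\top\ckopt_1\ge -\lambda/2\) with the sign aligned. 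For the lower bound I would first try the ball radius in \(B^\pm\): it is at most \(\tfrac1{15}\sqrt{-\lambda}\), so the inner product is at least \((1-\tfrac1{15})^2(-\lambda)\) after accounting for \(c_2\).
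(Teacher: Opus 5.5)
Your proposal is essentially the paper's proof. The paper derives the same identity $(\lambda-\lambdakopt)\,\ckoptT_1 c_1=\ckoptT_1 H_{12}c_2$ by subtracting the first block rows of the two eigen-equations and multiplying by $\ckoptT_1$, which is equivalent to your pairing of $(H-\lambdakopt)\hat c$ with $c$. It then bounds the right-hand side by $n_z\tau\|c_2\|_1<n_zQ\tau\|c_2\|_\infty$, inserts \cref{lemma:norm-of-c2}, and divides by $\ckoptT c>-\lambda/2$, which follows from $\ckopt\in D^{\pm}$; your $(1-\tfrac{1}{15})$-type lower bound works equally well.

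The one thing to fix is your hedge in the second step. The route $\|c_2\|_1\le\|c\|_1\le Q\|c\|_\infty$ loses a factor of $\tau$, so you must apply \cref{lemma:exp-decay} to $c_2$ itself, as the paper does by asserting that $c_2$ inherits the decay of $c$. Be aware that \cref{assumption:Exponential-decay} anchors the envelope at $|c_N|$ rather than at $\|c_2\|_\infty$. The inheritance is therefore an extra hypothesis in both your argument and the paper's; from the assumption alone one only gets $\|c_2\|_1=O(\tau\log(1/\tau))$.
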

\section{Technical proof}
Throughout this section, we assume that the support of \(c^{(l)}\)
(and hence the selected coordinates) is contained in the first \(k\) indices.
Since all arguments are invariant under permutations of coordinates,
we fix this ordering convention throughout the remainder of this section.
Furthermore, for notational simplicity, we assume that \(k>n_z\), where \(n_z\) denotes the maximum number of nonzero entries in any row of \(H\). The general case can be handled by replacing \(n_z\) with \(\min(k,n_z)\) throughout the analysis.

\subsection{\cref{lemma:gap}'s proof}
\begin{proof}

Firstly, at iteration \(l\), after selecting the coordinate \(j_l\), the compression algorithm behaves identically to the original (uncompressed) algorithm. Therefore, Lemma 5.5 in~\cite{li2019coordinatewise} still holds:
\begin{equation}
    f(c^{(l+1)}) \leq f(c^{(l)}) - \frac{1}{2L}\bigl(\nabla_{j_l} f(c^{(l)})\bigr)^2.
\end{equation}
Furthermore, the fact that $c^{(l+1)} \in D^{\pm}$ also follows from the theoretical analysis of the original algorithm. The function $f\left(c^{(l)}\right)$ is monotonically decreasing and has a lower bound $f(c)$, converging to a value $\eta \geq f(c)$. Fixing $0 <\varepsilon<\frac{n_z^2\tau^2}{2L}$, when $l$ is sufficiently large,
 $f\left(c^{(l)}\right)-f\left(c^{(l+1)}\right)<\varepsilon.$ That is $\frac{1}{2 L}\left(\nabla_{j_l} f(c^{(l)})\right)^2<\varepsilon,$ $\left|\nabla_{j_l} f\left(c^{(l)}\right)\right|<\sqrt{2 L \varepsilon}.$
Because $\left|\nabla_{j_l} f\left(c^{(l)}\right)\right|$ attains the maximum absolute value among the first $k$ components  (i.e., the selected coordinates)  of $\nabla f\left(c^{(l)}\right)$,
we have:
\begin{equation}
    \vert\nabla_i f(c^{(l)})\vert<\sqrt{2 L \varepsilon} ,\quad \forall 1 \leq i \leq k.
\end{equation}

When $i>k$, $b^{(l)}_i$ and $c_i=0$, and $\vert\nabla_i f(c^{(l)})\vert=4(Hc^{(l)})_i+(c^{(l)^{\top}}c^{(l)})(c^{(l)})_i=4(Hc^{(l)})_i$. $b^{(l)}_i=0$ indicates that $b_i$ was compressed during the update. Each time we update $c_{j_l}^{(l)}$ using exact line search, it is equivalent to updating from $0$, and we can consider it as having performed 
$k$ compressions. $b^{(l)}_i$ has been compressed at most $k$ times and the $j$-th compression introduces an error less than $\vert 4H_{ij}c^{(l)}_j\vert$.  Thus,
\begin{equation}
\begin{aligned}
\vert\nabla_i f(c^{(l)})\vert
&=4\vert (Hc^{(l)})_i\vert\\
&\leq\vert4b^{(l)}_i\vert+\sum_{j=1}^{k}\vert 4H_{ij}c^{(l)}_j\vert\\
&=\sum_{j=1}^{k}\vert 4H_{ij}c^{(l)}_j\vert
<4\min\{n_z,k\}\tau
=4n_z\tau,\qquad \forall\, i>k,
\end{aligned}
\end{equation}
where the last equality follows from the convention \(k>n_z\) adopted at the beginning of this section.
Since $\varepsilon<\frac{n_z^2\tau^2}{2L}$ and $\sqrt{2L\varepsilon}<n_z\tau$, we have
\begin{equation}
    \| \nabla f(c^{(l)}) \|_{\infty}<4n_z \tau.
\end{equation}

Moreover, as $c^{(l)}$ is updated, this inequality continues to hold.
Due to the strong convexity of $f$ on $B^{\pm}$:
\begin{equation}
\begin{aligned}
    f(c)&\geq f(c^{(l)})-(-\nabla f(c^{(l)})^{\top}(c-c^{(l)})-\frac{\mu_1}{2}\Vert c-c^{(l)}\Vert_1^2)\\
    &\geq f(c^{(l)})-(\Vert \nabla f(c^{(l)})\Vert_{\infty}\Vert c-c^{(l)}\Vert_1-\frac{\mu_1}{2}\Vert c-c^{(l)}\Vert_1^2))\\
    &\geq f(c^{(l)})-\frac{1}{2\mu_1}\Vert \nabla f(c^{(l)})\Vert_{\infty}^2\\
    &=f(c^{(l)})-\frac{8n_z^2\tau^2}{\mu_1},
\end{aligned}   
\end{equation}
where the third inequality is obtained by regarding the expression within the parentheses as a quadratic function of $\left\|c-c^{(l)}\right\|_1$, and finding the maximum value of this quadratic function. 
\end{proof}

Having established that the when $l$ is large the point of the iteration yields a function value close to the global minimum, 
we next check the limit point location. In particular, we show that the limiting point must lie within the region $D^{\pm}$.

\subsection{\cref{lemma:within-area}'s proof}
\begin{proof}
Let \(C_V=\{x\in\mathbb{R}^n : \operatorname{supp}(x)\subseteq V\}\).  
By construction, both \(\ckopt\) and \(c^{(l)}\) lie in \(C_V\).  
Recall that our objective function is
\[
f(x) = \| H + x x^{\top} \|_\mathrm{F}^2 .
\]
For any \(x \in C_V\), the matrix \(xx^{\top}\) has support contained in \(V \times V\); hence the value of \(f(x)\) depends only on the principal submatrix \(H_{VV}\) of \(H\).  
Therefore, minimizing \(f(x)\) over \(C_V\) is equivalent to solving the reduced problem
\[
\min_{z\in \mathbb{R}^{|V|}} \; \| H_{VV} + z z^{\top} \|_\mathrm{F}^2.
\]

In~\cite{li2019coordinatewise} it is shown that, for an objective of the form
\(
\|H + x x^{\top}\|_\mathrm{F}^2,
\)
the local minimum is achieved precisely at the eigenvector corresponding to the smallest eigenvalue.

Applying this result to the reduced problem with \(A = H_{VV}\), we conclude that the minimizer of 
\[
\min_{z} \|H_{VV} + z z^{\top}\|_\mathrm{F}^2
\]
is given by the eigenvector associated with the smallest eigenvalue of \(H_{VV}\).  
By definition, this minimizer is exactly \(\ckopt\), and hence \(\ckopt\) is a local minimum of \(f(x)\) over \(C_V\).
thus $f(\ckopt)<f(c^{(l)})$ and using \cref{lemma:gap} we have:
\begin{equation}
    f\left(\ckopt\right)-f(c)<\frac{8n_z^2 \tau^2}{ \mu_1} \text {. }
\end{equation}
Because $f(c)$ attains the global minimum of $f$, the quantity 
\[
\min_{y \in \partial B_{\pm}} f(y) - f(c)
\] 
is a positive constant. If the compression threshold $\tau$ satisfies
\[
\tau < \frac{\sqrt{\mu_1 \bigl( \min_{y \in \partial B_{\pm}} f(y) - f(c) \bigr)}}{2 \sqrt{2} n_z},
\] 
then 
\begin{equation}
    f(\ckopt) - f(c) < \min_{y \in \partial B_{\pm}} f(y) - f(c),
\end{equation}
and $f(\ckopt)< \min_{y \in \partial B_{\pm}} f(y)$,  which implies that $\ckopt$ must lie inside $D^{\pm}$. 

To see this, suppose by contradiction that $\ckopt \notin D^{\pm}$. Then $\ckopt$ lies in $\mathbb{R}^n \setminus B_{\pm}$, where the minimum of $f$ can only occur either on the boundary $\partial B_{\pm}$ or at a stationary point of $f$. The stationary points outside $B_{\pm}$ are $\pm \sqrt{-\lambda'} x$ (with $\lambda' > \lambda$ and $x$ the corresponding eigenvector) and the zero vector. However, if
\begin{equation}
\begin{aligned}
\tau &< \min\Biggl\{
\frac{\sqrt{\mu_1 (f(\sqrt{-\lambda'} x) - f(c))}}{2\sqrt{2} n_z}, \,
\frac{\sqrt{\mu_1 (f(\mathbf{0}) - f(c))}}{2\sqrt{2} n_z} 
\Biggr\} \\
&= \min\Biggl\{
\frac{\sqrt{\mu_1 (\lambda^2 - \lambda'^2)}}{2\sqrt{2} n_z}, \,
\frac{\sqrt{\mu_1 \lambda^2}}{2\sqrt{2} n_z} 
\Biggr\},
\end{aligned}
\end{equation}
then we have
\[
f(\ckopt) 
< f(c) + \frac{8 n_z^2 \tau^2}{\mu_1}
< \min\!\left\{ f(\pm \sqrt{-\lambda'} x),\, f(\mathbf{0}) \right\}.
\]
Hence, $f(\ckopt)$ is smaller than the function values at all points in $\mathbb{R}^n \setminus B_{\pm}$, which contradicts the assumption that $\ckopt$ lies outside $D^{\pm}$. Therefore, $\ckopt \in D^{\pm}$.
\end{proof}

Since the iterates lie in a strongly convex region, we can now proceed to prove the linear convergence of the iteration.

\subsection{\cref{theorem:linear-convergence}'s proof}
\begin{proof}

We use $\Delta\bar{c}$ to represent the truncated vector obtained by taking the first $k$ components from  $\ckopt-c^{(l)}$,  and $\nabla \bar{f}(c^{(l)})$ to represent the truncated vector obtained by taking the first $k$ components of $\nabla {f}(c^{(l)}).$ Since the nonzero elements of $\ckopt$ and $c^{(l)}$ are all in first $k$ positions, we get 
$\nabla f(c^{(l)})^{\top}(\ckopt-c^{(l)})=\nabla \bar{f}(c^{(l)})^{\top}\Delta\bar{c}$ and $\Vert \ckopt-c^{(l)}\Vert_1=\Vert \Delta\bar{c}\Vert_1.$
Applying the strong convexity of $f$ in $ D^{\pm}$ again:
\begin{equation}
    \begin{aligned}
    f(\ckopt)&\geq f(c^{(l)})-(-\nabla f(c^{(l)})^{\top}(\ckopt-c^{(l)})-\frac{\mu_1}{2}\Vert \ckopt-c^{(l)}\Vert_1^2)\\
    &=f(c^{(l)})-(-\nabla \bar{f}(c^{(l)})^{\top}\Delta\bar{c}-\frac{\mu_1}{2}\Vert \Delta\bar{c}\Vert_1^2)\\
    &\geq f(c^{(l)})-(\Vert \nabla \bar{f}(c^{(l)})\Vert_{\infty}\Vert \Delta\bar{c}\Vert_1-\frac{\mu_1}{2}\Vert\Delta\bar{c}\Vert_1^2)\\
    &\geq f(c^{(l)})-\frac{1}{2\mu_1}\Vert \nabla \bar{f}(c^{(l)})\Vert_{\infty}^2\\
    &= f(c^{(l)})-\frac{1}{2\mu_1}\max_{1\leq i\leq k}\vert\nabla_if(c^{(l)})\vert^2\\
    &=f(c^{(l)})-\frac{1}{2\mu_1}(\nabla_{j_l}f(c^{(l)}))^2.
\end{aligned}
\end{equation}
The fourth inequality is obtained similarly by regarding the expression within the parentheses as a quadratic function of $\left\|\Delta\bar{c}\right\|_1$.Substituting $f(c^{(l+1)})\leq f(c^{(l)})-\frac{1}{2L}(\nabla_{j_l} f(c^{(l)}))^2$ obtained in \cref{lemma:gap} into the inequality, we get:
\begin{equation}
    f(c^{(l+1)})-f(\ckopt)\leq(1-\frac{\mu_1}{L})(f(c^{(l)})-f(\ckopt)).
\end{equation}

We have got the linear convergence of  function value. Since $D^{\pm}$ can be divided into two symmetric regions $D^{+}$ and $D^{-}$, the limiting point of the iteration must lie in one of them. Without loss of generality, we assume $c^{(l)} \in D^{+}$, and hence we may choose $\ckopt \in D^{+}$ as the convergence target. By the strong convexity of $f(x)$ in $D^{+}$, we obtain
\begin{equation}
\label{eq:c^l to ckopt}
    \begin{aligned}
    \Vert c^{(l)}-\ckopt\Vert_2^2
    &\leq \frac{2}{\mu_2}\Bigl(f(c^{(l)})-f(\ckopt)-\nabla f(\ckopt)^{\!\top}(c^{(l)}-\ckopt)\Bigr) \\
    &= \frac{2}{\mu_2}\bigl(f(c^{(l)})-f(\ckopt)\bigr) \\
    &\leq \frac{2}{\mu_2}\left(1-\frac{\mu_1}{L}\right)^l\bigl(f(c^{(0)})-f(\ckopt)\bigr)\;\;\longrightarrow\;0 .
    \end{aligned}
\end{equation}
The second equality holds because $\ckopt$ is a local minimizer of $f$ restricted to its support set $V$ (i.e., the first k indices). In particular, on the coordinates in $V$, the gradient vanishes, i.e., 
\[
\nabla_i f(\ckopt) = 0 \quad \text{for all } i \in V.
\]
Moreover, all nonzero entries of $(c^{(l)} - \ckopt)$ lie within this support set $V$. Therefore, the inner product 
\[
\nabla f(\ckopt)^T (c^{(l)} - \ckopt) = 0,
\]
and it follows that the iteration points converge to $\ckopt$.

On the other hand, if $c^{(l)} \in D^{-}$, then by symmetry we may take $-\ckopt \in D^{-}$ as the reference point. The same strong convexity argument applied in $D^{-}$ leads to an identical bound, and hence $\|c^{(l)} \mp \ckopt\|_2 \to 0$ depending on which region the iterates belong to.\qed
    
\end{proof}

Having presented the linear convergence result for the iterative sequence toward its stationary point \(\ckopt\), we next quantify the error in the estimated eigenvalue obtained from \(\ckopt\) relative to the true eigenvalue.

\section{\cref{lemma:exp-decay}'s proof}
\begin{proof}

We only prove the case for $s=1$, and the case for $s=2$ follows similarly. 
\begin{equation}
    \begin{aligned}
        \Vert x\Vert_1=\sum_{i=1}^n\vert (x)_i\vert&\leq N\Vert x\Vert_{\infty}+\sum_{j=N+1}^n\Vert x\Vert_{\infty}e^{-P(j-N)}\\
        &=N\Vert x\Vert_{\infty}+\left(\frac{e^{-P}-e^{-P(n-N+1)}}{1-e^{-P}}\right)\Vert x\Vert_{\infty}\\
        &\leq \left(N+\frac{e^{-P}}{1-e^{-P}}\right)\Vert x\Vert_\infty.
    \end{aligned}
\end{equation}
We let $Q_1= N+\frac{e^{-P}}{1-e^{-P}}$.
For case $s=2$, we can also get a constant $Q_2$ similarly. We let $Q=\max\{Q_1,Q_2\}$.
\end{proof}

\subsection{\cref{lemma:O(tau)}'s proof}

\begin{proof}
As shown in the proof of \cref{lemma:gap}, the convergence analysis implies that for sufficiently large $l$,
\[
\|\nabla f(c^{(l)})\|_{\infty}
= 4\left\|H c^{(l)} + \big(c^{(l)\top} c^{(l)}\big) c^{(l)}\right\|_{\infty}
< 4 n_z \tau.
\]
So, 
\[\left\|H c^{(l)}+\left(c^{(l) T} c^{(l)}\right) c^{(l)}\right\|_{\infty}<n_z \tau.\]
Since for every iterate \(c^{(l)}\), we have
\[
\big\|H c^{(l)} + (c^{(l)\top} c^{(l)}) c^{(l)}\big\|_{\infty} < n_z \tau,
\]
and \(c^{(l)} \to \ckopt\) as \(l\to\infty\) (as in \cref{eq:c^l to ckopt}), the same bound holds at the limit. Indeed, the mapping
\[
\Phi(x) := Hx + (x^\top x)\,x
\]
is continuous on \(\mathbb{R}^n\). Therefore
\[
\Phi(\ckopt) = \lim_{l\to\infty} \Phi(c^{(l)}),
\]
and taking the \(\ell_\infty\)-norm and passing to the limit yields
\[\left\|H \ckopt+\left(\ckoptT\ckopt\right)\ckopt\right\|_{\infty}\leq n_z \tau .\]
From above we  can get:
\begin{equation}
\begin{aligned}
\left|c^{\top}
\bigl(H\ckopt+(\ckoptT\ckopt)\ckopt\bigr)\right|
&\leq
\|c\|_1
\bigl\|H\ckopt+(\ckoptT\ckopt)\ckopt\bigr\|_{\infty} \leq n_z\|c\|_1\tau .
\end{aligned}
\end{equation}

Because $c$ is the eigenvector of $H$  corresponding to $\lambda$, $\ckopt$ is the eigenvector of $H_{11}$ and  $\Vert\ckopt\Vert_2=-\lambdakopt$, we have:
\begin{equation}
\begin{aligned}
\left|\lambda c^{\top}\ckopt-\lambdakopt c^{\top}\ckopt\right|
&=
\left|c^{\top}H\ckopt+(\ckoptT\ckopt)c^{\top}\ckopt\right| \leq n_z\|c\|_1\tau.
\end{aligned}
\end{equation}
By \cref{lemma:within-area} $\ckopt \in D^{ \pm}$, and let us denote
\begin{equation}
    d=\frac{1}{30} \frac{\min \left(-2 \lambda,-\lambda+\lambda_2\right)}{\sqrt{-\lambda}} .
\end{equation}
We can get $d<\frac{1}{2} \sqrt{-\lambda}=\frac{1}{2}\|c\|_2$  and 
\begin{equation}
c^{\top} \ckopt \geq c^{\top}(c-d\frac{c}{\Vert c\Vert_2}) =\|c\|_2^2-d\|c\|_2>\frac{1}{2}\|c\|_2^2=-\frac{\lambda}{2} . 
\end{equation}
So,
\begin{equation}
    \vert \Delta\lambda\vert<-\frac{2n_z\Vert c\Vert_1\tau}{\lambda}<\frac{2n_zQ\tau}{\sqrt{-\lambda}}.
\end{equation}

\end{proof}

Based on the $O(\tau)$ estimate of the eigenvalue error, we further analyze the bound of truncated components of the eigenvector. By controlling their contribution, we are able to derive a sharper $O(\tau^2)$ bound on the eigenvalue approximation error.

\subsection{\cref{lemma:norm-of-c2}'s proof}

\begin{proof}
Because $c$ is the eigenvector, we have
\begin{equation}
\label{eigenvector c}
\begin{aligned}
& H_{11} {c}_{1}+H_{12} {c}_{2}=\lambda {c}_{1}, \\
& H_{21} {c}_{1}+H_{22} {c}_{2}=\lambda {c}_{2}.
\end{aligned}
\end{equation}
We denote $c-\ckopt=\Delta c=\left(\begin{array}{c}
\Delta{c}_1 \\
c_2
\end{array}\right)$.
Using the compression conditions, we have:
\begin{equation}
\label{compression condition}
\begin{aligned}
& H_{11} \ckopt_{1}=\lambdakopt \ckopt_{1}, \\
& \vert(H_{21})_{ij} (\ckopt_{1})_j\vert<\tau;\quad \forall i>k ,j\leq k.
\end{aligned}
\end{equation} 
Combining \cref{eigenvector c} and \cref{compression condition} we have:
\begin{equation}
\begin{aligned}
H_{11} \Delta c_{1}+H_{12} c_{2}&=\lambda c_{1}-\lambdakopt \ckopt_{1}\\
&=\lambda c_1-\lambda\ckopt_1+\lambda\ckopt_1-\lambdakopt\ckopt_1\\
&=\lambda\Delta c_1+\Delta\lambda\ckopt_1.
\end{aligned}
\end{equation}
We also have 
\begin{equation}
    H_{21} \Delta c_{1}+H_{21} \ckopt_1+H_{22} c_{2}= H_{21} {c}_{1}+H_{22} {c}_{2}=\lambda {c}_{2}.
\end{equation}
These equation can be rearranged as follows: 
\begin{equation}
\label{equation of delta c}
\begin{aligned}
& H_{11} \Delta c_{1}+H_{12} c_{2}-\Delta\lambda\ckopt=\lambda \Delta c_{1}, \\
& H_{21} \Delta c_{1}+H_{22} c_{2}-\lambda c_2=-H_{21}\ckopt_1.
\end{aligned}
\end{equation}
Suppose that $\Delta c + x$ is an eigenvector of $H$ associated with the eigenvalue $\lambda$. 
Then $x$ satisfies
\begin{equation}
\label{equation of x}
    H \Delta c + H x = \lambda \Delta c + \lambda x .
\end{equation}

Combining \cref{equation of delta c} and \cref{equation of x} we have
\begin{equation}\label{eq:equation of x}
\begin{aligned}
(\lambda I-H)x=H\Delta c-\lambda\Delta c 
&=\left(\begin{array}{c}
H_{11} \Delta c_{1}+H_{12} c_{2}-\lambda\Delta c_1 \\
H_{21} \Delta c_{1}+H_{22} c_{2}-\lambda c_2
\end{array}\right)\\
&=\left(\begin{array}{c}
\Delta \lambda \ckopt_1 \\
-H_{21}\ckopt_1
\end{array}\right).
\end{aligned}
\end{equation}
The least squares solution of this equation is 
\begin{equation}
 x=(\lambda I-H)^\dagger \left(\begin{array}{c}
\Delta \lambda \ckopt_1 \\

-H_{21}\ckopt_1
\end{array}\right),
\end{equation}
where $(\lambda I-H)^\dagger$ is the pseudo-inverse of $\lambda I-H.$
Now we analyze the infinity norm of $x.$ By spanning $H_{21}\ckopt_1$ we can get
\begin{equation}
\begin{aligned}
\left\|H_{21} \ckopt_1\right\|_2^2 & =\sum_{i=k}^n\left(\sum_{j=1}^k H_{i j} c_j\right)^2  =\sum_{i=k}^n\left(\sum_{j_1, j_2=1}^k H_{i j_1} c_{j_1} H_{i j_2} c_{j_2}\right).
\end{aligned}
\end{equation}
We already have $\vert H_{i j_1} c_{j_1}\vert$ and $\vert H_{i j_2} c_{j_2}\vert<\tau$, and we need to count there are how many nonzero elements in $H_{ij_1}H_{ij_2}$.  First, choose $i$ and $j_1$, which gives most $kn_z$ possible choices. Then, $j_2$  has most $n_z$ options because we have fixed row index $i$, resulting in a total of $kn_z^2$ choices. So $\left\|H_{21} \tilde{c}_1\right\|_2^2\leq kn_z^2\tau^2.$
And we already know $\Vert \ckopt\Vert_2^2=-\lambdakopt$, and from \cref{lemma:O(tau)} we know that $\vert \Delta\lambda\vert<-\frac{2n_z\Vert c\Vert_1\tau}{\lambda}.$ Therefore,
\begin{equation}
\left\Vert
\binom{\Delta \lambda \tilde{c}_1}
      {-H_{21}\ckopt_1}
\right\Vert_2
<
m\sqrt{k}\,n_z\tau,
\qquad
m:=
\sqrt{\frac{-4\lambdakopt\|c\|_1^2}{k\lambda^2}+1}.
\end{equation}
Consequently,
\begin{equation}
\begin{aligned}
\|x\|_\infty
&\le \|x\|_2 \\
&\le
\left\|(\lambda I-H)^\dagger\right\|_2
\left\|
\binom{\Delta \lambda \tilde{c}_1}
      {-H_{21}\tilde{c}_1}
\right\|_2 \\
&<
\left\|(\lambda I-H)^\dagger\right\|_2
\,m n_z\sqrt{k}\,\tau \\
&=
\frac{m n_z\sqrt{k}\,\tau}
     {\operatorname{gap}(H)}.
\end{aligned}
\end{equation}
We have already assumed that  $\Delta c + x$ is an eigenvector of $H$
associated with the eigenvalue $\lambda$. 
Therefore, there exists a scalar $\alpha \in \mathbb{R}$ such that
\[
\Delta c + x = \alpha c .
\]
We proceed by contradiction. 
Assume that $\|c_2\|_\infty \ge 2\|x\|_\infty$.
Then there exists an index $j$ in the support of $c_2$ such that
\[
|(c_2)_j| = \|c_2\|_\infty .
\]
For this index, we have
\[
|(c_2)_j + x_j|
\ge |(c_2)_j| - |x_j|
\ge \frac{1}{2}|(c_2)_j|.
\]
On the other hand, since $\Delta c + x = \alpha c$, we also have
\[
|(c_2)_j + x_j| = |\alpha|\, |(c_2)_j|,
\]
which implies $|\alpha| \ge \frac{1}{2}$. Moreover, since $-\Delta c$ is also a solution of \eqref{eq:equation of x},
we obtain
\[
\|x\|_2 \le \|\Delta c\|_2 < d .
\]
Consequently,
\[
\|\Delta c + x\|_2 \le \|\Delta c\|_2 + \|x\|_2 < 2d .
\]
However, noting that $\|c\|_2 = \sqrt{-\lambda}$, the relation
$\Delta c + x = \alpha c$ together with $|\alpha| \ge \frac{1}{2}$
yields
\[
\|\Delta c + x\|_2 = |\alpha|\,\|c\|_2
\ge \frac{1}{2}\sqrt{-\lambda},
\]
which contradicts the assumption $2d < \frac{1}{2}\sqrt{-\lambda}$.
Therefore, the assumption $\|c_2\|_\infty \ge 2\|x\|_\infty$ is false, and we conclude that
\[
\|c_2\|_\infty < 2\|x\|_\infty
= \frac{2 m n_z \sqrt{k}\,\tau}{\operatorname{gap}(H)} .
\]

\end{proof}

\subsection{\cref{theorem: O(tau^2)}'s proof}
    
\begin{proof}
Recall that
\begin{equation}
\lambda c_1=H_{11}c_1+H_{12}c_2,\quad \lambdakopt\ckopt_1=H_{11}\ckopt_1.
\end{equation}
Subtracting the two equations, we have:
\begin{equation}
    \lambda c_1-\lambdakopt\ckopt_1-H_{11}(c_1-\ckopt_1)=H_{12}c_2 .
\end{equation}
Left-multiplying by $c^{\text{k-opt}^{\top}}$, we get:
\begin{equation*}
\lambda \ckoptT c_1-\lambdakopt\Vert \ckopt_1\Vert_2^2-\lambdakopt\ckoptT_1 c_1+\lambdakopt\Vert \ckopt_1\Vert_2^2=\ckoptT_1H_{12}c_2.
\end{equation*}
Rearranging this equation, we obtain
\begin{equation}
\label{equation of delta lambda}
    \Delta\lambda \ckoptT_1 c_1= (\lambda-\lambdakopt)\ckoptT_1 c_1=\ckoptT_1H_{12}c_2.
\end{equation}
Now we should analyze the upper bound of $\ckoptT_1H_{12}c_2$. Since 
\begin{equation}
\ckoptT_1H_{12}c_2=\sum_{i=1}^{k}\sum_{j=k}^n H_{ij}(\ckopt_1)_i(c_2)_j,
\end{equation}
and \[\vert H_{ij}(\ckopt_1)_i\vert<\tau,\quad\sum\limits_{i=1}^k\vert H_{ij}(\ckopt_1)_i\vert<n_z\tau, \] we have
\begin{equation}
    \vert\ckoptT_1H_{12}c_2\vert<\sum_{j=k}^{n}\vert n_z\tau\vert\vert (c_2)_i\vert.
\end{equation}
Since \(c_2\) is a subvector of \(c\), it inherits the exponential decay property of \(c\) and has the same exponential decay coefficient. Using the method similar to \cref{lemma:exp-decay}, we can derive
\begin{equation}
    \sum_{j=k}^{n}\vert (c_2)_i\vert<Q\Vert c_2\Vert_{\infty}=\frac{2mn_z\sqrt{k}Q\tau}{\operatorname{gap}(H)},
\end{equation}
and 
\begin{equation}
     \vert \ckoptT_1H_{12}c_2\vert<\frac{2mn_z^2\sqrt{k}Q\tau^2}{\operatorname{gap}(H)}.
\end{equation}
Moreover, from the estimate $d < \tfrac{1}{2}\|c\|_2$ established in the proof of \cref{lemma:O(tau)}, it follows that
\[
\ckoptT c \ge \|c\|_2^2 - d\|c\|_2 > \tfrac{1}{2}\|c\|_2^2 = -\tfrac{\lambda}{2}.
\]
From \cref{equation of delta lambda}, we obtain:
\begin{equation}
    \Delta\lambda< \frac{4mn_z^2\sqrt{k}Q\tau^2}{-\lambda\operatorname{gap}(H)}.
\end{equation}
    
\end{proof}

\section{Numerical Results}

In this section, we present numerical experiments to support our theoretical findings and demonstrate the practical performance of the compressed CDFCI algorithm. The experiments are organized into three parts. First, we verify the linear convergence using synthetic sparse matrices with exponentially decaying eigenvectors. We demonstrate that the convergence rate of the compressed algorithm matches that of the original uncompressed method. Second, we validate the error bound derived in the previous section. The results confirm that the numerical error remains consistently below the theoretical bound. Third, we apply the algorithm to realistic quantum chemistry systems. Exploiting the \( O(\tau^2) \) scaling of the error, we use extrapolation techniques on systems such as \(\text{H}_2\text{O}\) and \(\text{C}_2\) (with the cc-pVDZ basis set) to obtain improved estimates of the ground-state energy.

\subsection{Verification of Linear Convergence}

To construct test matrices with controlled spectral and sparsity properties, we first generate a set of orthonormal eigenvectors, ensuring that the first eigenvector (associated with the smallest eigenvalue) exhibits exponential decay. Random eigenvalues are then assigned to these eigenvectors, and the matrix is constructed as \(A = V\Lambda V^{\top}\). To enforce sparsity, we truncate entries with small magnitudes in the resulting matrix. After sparsification, we verify whether the first eigenvector still maintains exponential decay. If not, we re-impose this structure by element-wise multiplication with an exponential decay factor and repeat the construction procedure until the desired properties are satisfied.

For all experiments, the initial iterate is chosen as the target eigenvector perturbed by a random noise vector. This initialization ensures that the iterates start in a neighborhood of the desired solution while avoiding the trivial case of starting from the exact eigenvector.

\Cref{fig:convergence_tau} displays the convergence behavior of CDFCI under three different compression thresholds: \(\tau = 0\) (uncompressed), \(10^{-5}\), and \(10^{-7}\). The horizontal axis represents the iteration count, and the vertical axis plots the logarithm of the objective gap, \(\log(f(c^{(l)}) - f(\ckopt))\). In each subplot, the red line represents a linear fit, highlighting the linear convergence rate.
\begin{figure}[htbp]
    \centering
    \begin{subfigure}[b]{0.32\textwidth}
        \includegraphics[width=\textwidth]{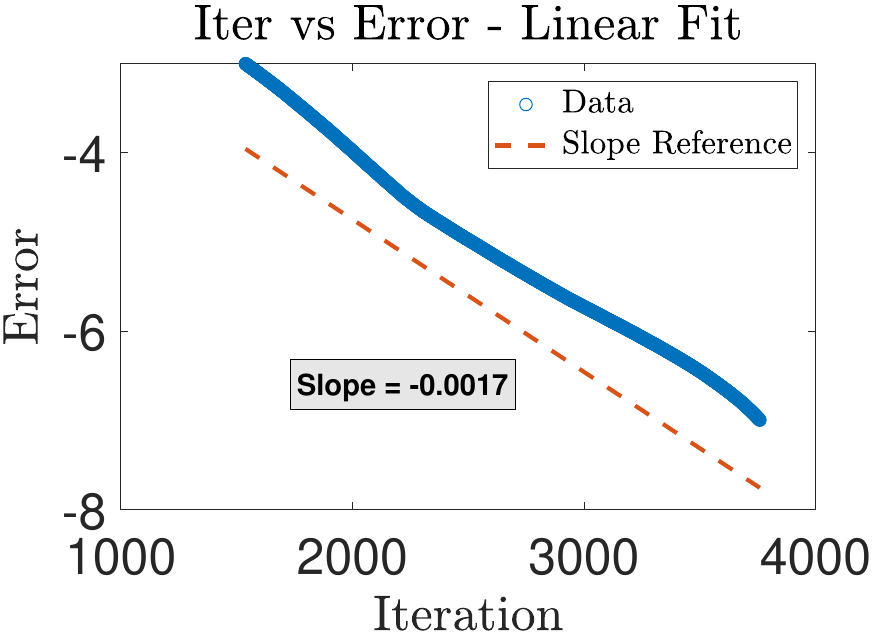}
        \caption{$\tau = 0$}
    \end{subfigure}
    \hfill
    \begin{subfigure}[b]{0.32\textwidth}
        \includegraphics[width=\textwidth]{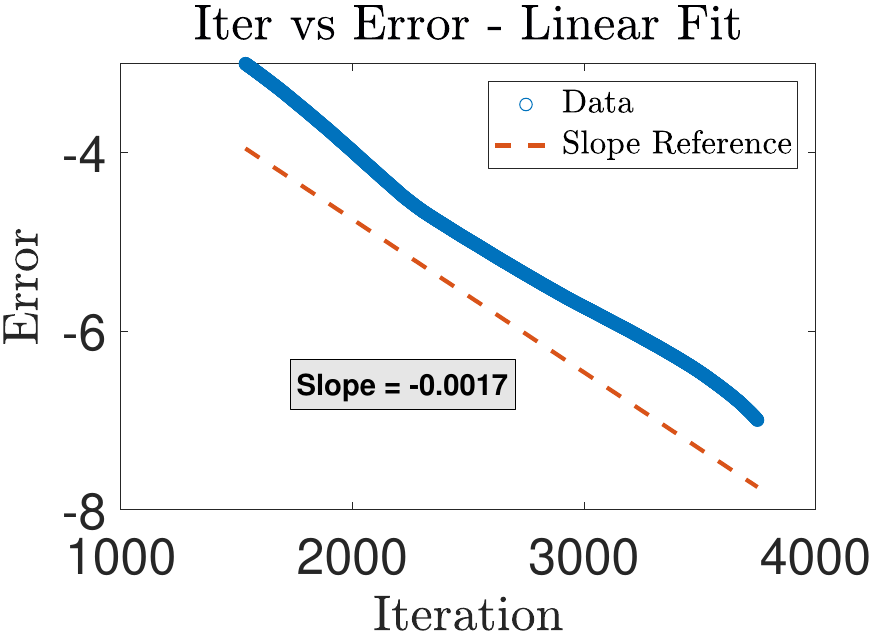}
        \caption{$\tau = 10^{-5}$}
    \end{subfigure}
    \hfill
    \begin{subfigure}[b]{0.32\textwidth}
        \includegraphics[width=\textwidth]{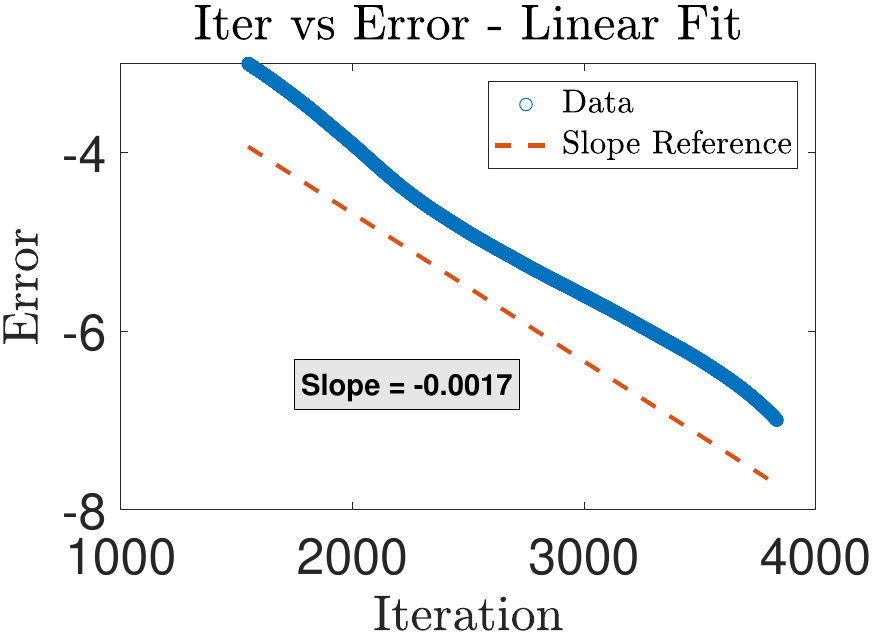}
        \caption{$\tau = 10^{-7}$}
    \end{subfigure}
    \caption{Convergence behavior under different compression thresholds. The horizontal axis represents the iteration number, and the vertical axis plots the logarithmic objective gap $\log(f(c^{(l)}) - f(\ckopt))$. The red lines indicate the linear fit.}
    \label{fig:convergence_tau}
\end{figure}

As shown in \cref{fig:convergence_tau}, the algorithm maintains linear convergence even when compression is applied. Moreover, the observed convergence rates for the compressed cases are nearly identical to the uncompressed version, indicating that the compression strategy does not compromise the convergence speed.

\subsection{Verification of the Error Bound}

In this subsection, we investigate the tightness of the theoretical error bound derived in Section 3. We first consider the same synthetic matrix class used in the previous subsection, referring to it as \textbf{Type 1}. To further evaluate the error bound under different sparsity patterns, we introduce a second class of matrices, referred to as \textbf{Type 2}.

A Type 2 matrix consists of two main blocks: a diagonal block in the top-left and a tridiagonal block in the bottom-right, coupled by a small number of off-diagonal entries. This structure preserves the exponential decay of the first eigenvector while significantly increasing the overall sparsity of the matrix. \Cref{fig:type2_spy} illustrates the sparsity pattern of a representative Type 2 matrix, where the block structure and coupling entries are clearly visible.

\begin{figure}[htbp]
    \centering
    \includegraphics[width=\stdfigwidth]{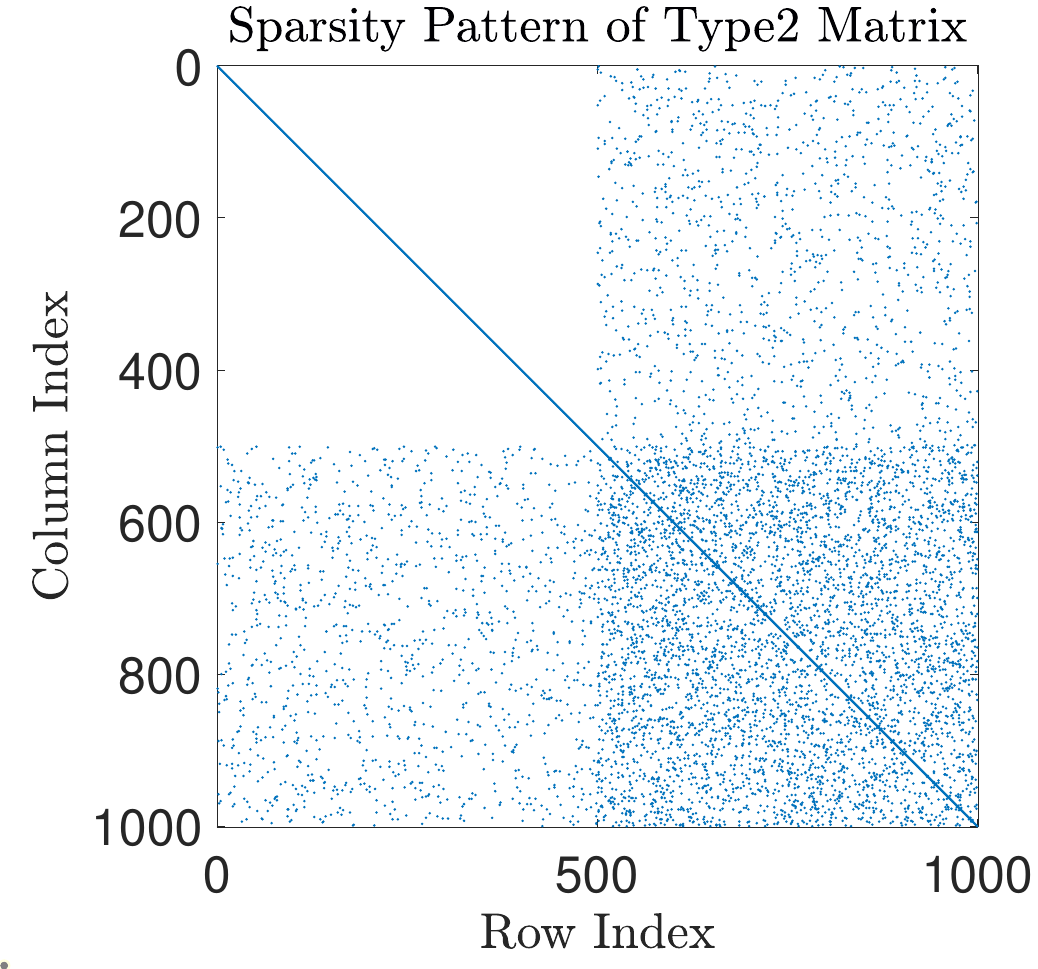}
    \caption{Sparsity pattern of a representative Type 2 matrix.}
    \label{fig:type2_spy}
\end{figure}

\Cref{fig:err_bound_compare} presents a comparison between the actual Rayleigh quotient error and the theoretical bound for both matrix types. In both cases, the actual error remains consistently below the predicted bound and exhibits a similar decay pattern as \(\tau\) decreases. Notably, the Type 2 matrix yields a significantly tighter bound: the typical ratio between the bound and the true error ranges from \(10^1\) to \(10^2\), whereas for the Type 1 matrix, this ratio ranges from \(10^2\) to \(10^3\).

\begin{figure}[htbp]
    \centering
    \begin{subfigure}[b]{\stdfigwidth}
        \centering
        \includegraphics[width=\stdfigwidth]{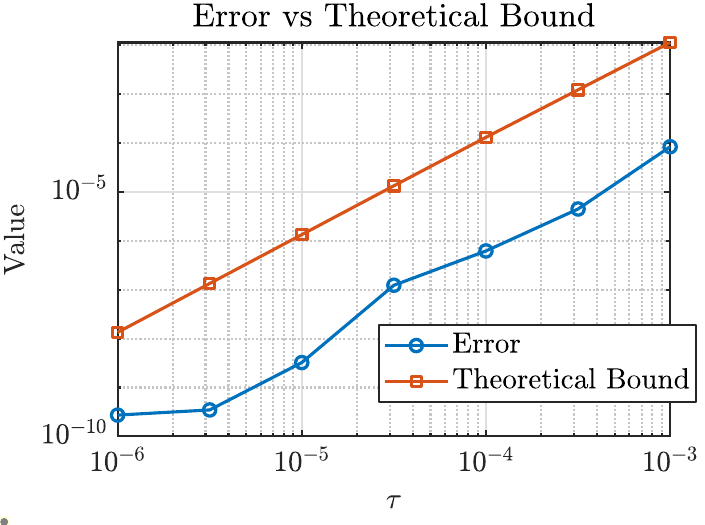}
        \caption{Type 1 matrix}
        \label{fig:err_bound_old}
    \end{subfigure}
    \hfill
    \begin{subfigure}[b]{\stdfigwidth}
        \centering
        \includegraphics[width=\stdfigwidth]{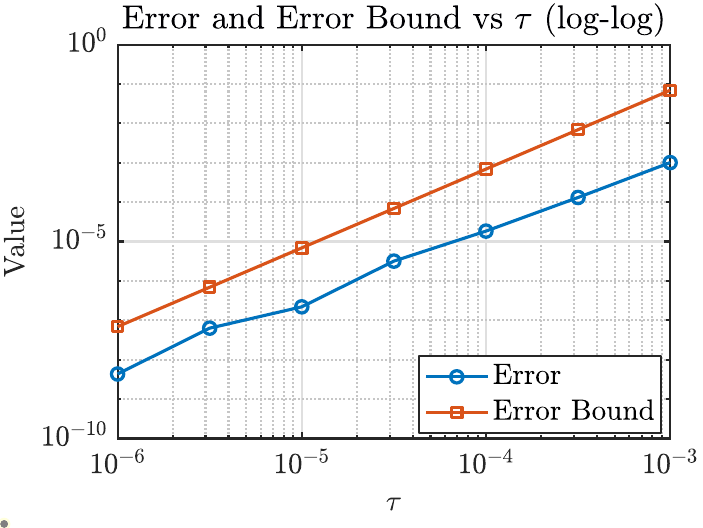}
        \caption{Type 2 matrix}
        \label{fig:err_bound_new}
    \end{subfigure}
    \caption{Comparison of the actual error and the theoretical bound under varying \(\tau\) for the two types of synthetic matrices.}
    \label{fig:err_bound_compare}
\end{figure}

Next, we analyze the compression error and the theoretical error bound for the \(\text{H}_2\text{O}\) molecule using the cc-pVDZ basis set. The behavior observed in \cref{fig:h2o_err} aligns with the results from our synthetic test matrices: the actual error consistently satisfies the theoretical bound and exhibits a similar decay pattern as \(\tau\) decreases. However, the ratio between the bound and the true error is larger, typically ranging from \(10^3\) to \(10^4\). This looseness in the bound may be attributed to the extremely high sparsity characteristic of FCI Hamiltonians in quantum chemistry problems.

\begin{figure}[htbp]
    \centering
    \includegraphics[width=\stdfigwidth]{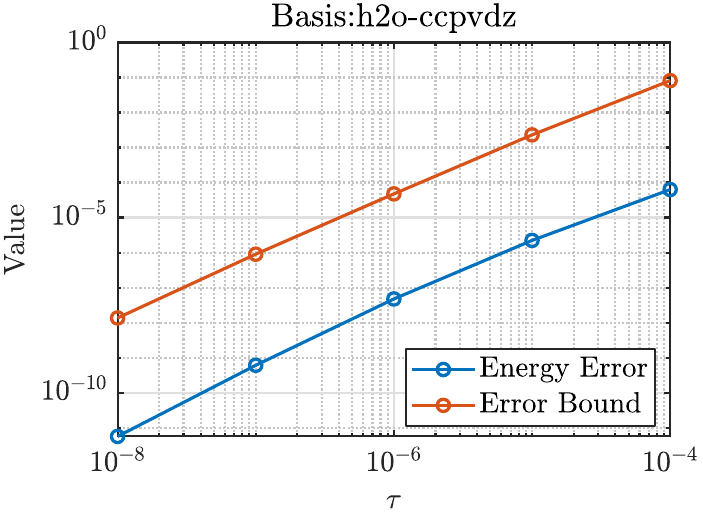}
    \caption{Compression error and theoretical error bound for \(\text{H}_2\text{O}\) (cc-pVDZ).}
    \label{fig:h2o_err}
\end{figure}

\subsection{Extrapolation of Ground-State Energy via Compression Error}

In this subsection, we demonstrate how to leverage the \( O(\tau^2) \) quadratic dependence of the energy error on the compression threshold to obtain improved estimates of the ground-state energy. Even when the exact Full Configuration Interaction (FCI) solution is computationally inaccessible, this scaling law enables us to extrapolate the ground-state energy by fitting results at different values of $\tau$.

The extrapolation scheme is based on the theoretical finding that the compression error scales quadratically with \(\tau\). Specifically, we adopt the following model (ansatz):
\begin{equation}
    E(\tau) = E_0 + a \tau^2,
\end{equation}
where \( E(\tau) \) denotes the computed ground-state energy at threshold \(\tau\), \( E_0 \) represents the extrapolated true energy (corresponding to the limit \(\tau \to 0\)), and \( a \) is a fitting coefficient.

To reduce the impact of ill-conditioning, we rewrite the model in logarithmic form:
\begin{equation}
    \log(E(\tau) - E_0) = \log(a) + 2 \log(\tau).
\end{equation}
Given this relation, we employ an iterative procedure: we first select a preliminary estimate for \( E_0 \), and then solve a linear least-squares problem to find the best-fit parameter \( a \). Subsequently, we refine \( E_0 \) via a bisection search to minimize the fitting residual in the logarithmic domain. This process is repeated until convergence, yielding an accurate extrapolated estimate of the true ground-state energy.

We apply this extrapolation strategy to quantum chemistry systems using standard basis sets, specifically \(\text{H}_2\text{O}\) and \(\text{C}_2\) with cc-pVDZ. To evaluate the robustness of the method, we consider two distinct threshold regimes: a high-precision range (\(10^{-8}\text{--}10^{-7}\)) and a low-precision range (\(10^{-5}\text{--}10^{-4}\)). In each range, we uniformly sample 10 values of \(\tau\). This allows us to examine the extrapolation accuracy in both high- and low-precision regimes.

\begin{table}[htbp]
\centering
\caption{Comparison of extrapolation performance under different compression threshold regimes.}
\label{tab:extrapolation}
\resizebox{\textwidth}{!}{
    \begin{tabular}{lcc}
    \toprule
    \textbf{System} & \textbf{Min. Error (Computed)} & \textbf{Error (Extrapolated)}  \\
    \midrule
    \multicolumn{3}{l}{\textit{Small \(\tau\) regime (\(10^{-8}\) to \(10^{-7}\))}} \\
    \(\text{H}_2\text{O}\) (cc-pVDZ) & \(5.6 \times 10^{-12}\) & \(1.1 \times 10^{-12}\) \\
    \(\text{C}_2\) (cc-pVDZ)   & \(2.0 \times 10^{-10}\) & \(2.6 \times 10^{-11}\) \\
    \midrule
    \multicolumn{3}{l}{\textit{Large \(\tau\) regime (\(10^{-5}\) to \(10^{-4}\))}} \\
    \(\text{H}_2\text{O}\) (cc-pVDZ) & \(2.2 \times 10^{-6}\)  & \(7.0 \times 10^{-7}\)  \\
    \(\text{C}_2\) (cc-pVDZ)   & \(6.5 \times 10^{-5}\)  & \(3.4 \times 10^{-5}\)  \\
    \bottomrule
    \end{tabular}
}
\end{table}

The results demonstrate that extrapolation yields significant accuracy improvements when the original data is in the high-precision regime (small \(\tau\)). However, for data with lower precision (large \(\tau\)), the improvement is less pronounced. This is expected, as the error asymptotically approaches the quadratic scaling \( O(\tau^2) \) only as \(\tau \to 0\). Consequently, the quadratic model provides a much better fit in the small \(\tau\) regime. Developing extrapolation techniques that remain effective for larger \(\tau\) values represents an important direction for future research.

\section{Conclusion}

In this work, we established a theoretical framework for the CDFCI with compression algorithm. Under mild assumptions on the spectral gap and exponential decay of the ground-state eigenvector, we proved that the algorithm maintains linear convergence toward the compressed stationary point and derived an error bound of order $O(\tau^2)$ for the Rayleigh quotient. Numerical experiments on both synthetic sparse matrices and realistic quantum chemistry systems confirm the theoretical predictions: the algorithm exhibits consistent linear convergence and achieves energy errors in agreement with the $O(\tau^2)$ bound. Furthermore, by exploiting the theoretically characterized dependence of the error on $\tau^2$, we proposed an extrapolation scheme that yields significantly improved energy estimates, demonstrating the practical value of the derived error analysis.

\section*{Acknowledgments}

The authors acknowledge Jianfeng Lu and Zhe Wang for their contributions to the original development of the CDFCI method. This work was supported in part by the National Natural Science Foundation of China under Grant Nos.~12271109 and 12526211; the Shanghai Pilot Program for Basic Research--Fudan University under Grant No.~21TQ1400100 (22TQ017); the Scientific Research Innovation Capability Support Project for Young Faculty under Grant No.~SRICSPYF-ZY2025159; and the Xuemin Institute of Advanced Studies, Fudan University.
\bibliographystyle{siamplain}
\bibliography{CDFCI}

\end{document}